\newtheorem{theorem}{Theorem}[section]
\newtheorem{proposition}[theorem]{Proposition}
\newtheorem{lemma}[theorem]{Lemma}
\theoremstyle{remark}
\newtheorem{remark}[theorem]{Remark}
\DeclareMathOperator{\mes}{mes\,}
\DeclareMathOperator{\dist}{dist}
\begin{document}

\title[Almost sure global well posedness for 2D radial NLS on the ball]{Almost sure global well posedness for the radial nonlinear Schr\"odinger equation on the unit ball I: the 2D case}
\date{\today}
\author{Jean Bourgain}
\address{(J. Bourgain) School of Mathematics, Institute for Advanced Study, Princeton, NJ 08540}
\email{bourgain@math.ias.edu}
\author{Aynur Bulut}
\address{(A. Bulut) School of Mathematics, Institute for Advanced Study, Princeton, NJ 08540}
\email{abulut@math.ias.edu}
\begin{abstract}
Our first purpose is to extend the results from \cite{T} on the radial defocusing NLS on the disc in $\mathbb{R}^2$ to arbitrary smooth (defocusing) nonlinearities and show the existence of a well-defined flow on the support of the Gibbs measure (which is the natural extension of the classical flow for smooth data).  We follow a similar approach as in \cite{BB-1} exploiting certain additional a priori space-time bounds that are provided by the invariance of the Gibbs measure.

Next, we consider the radial focusing equation with cubic nonlinearity (the mass-subcritical case was studied in \cite{T2}) where the Gibbs measure is subject to an $L^2$-norm restriction.  A phase transition is established, of the same nature as studied in the work of Lebowitz-Rose-Speer \cite{LRS} on the torus.  For sufficiently small $L^2$-norm, the Gibbs measure is absolutely continuous with respect to the free measure, and moreover we have a well-defined dynamics.
\end{abstract}
\maketitle

\section{Introduction}

The purpose of this work is to establish global well-posedness results for the initial value problems associated to the defocusing ($-$) and focusing ($+$) nonlinear Schr\"odinger equation,
\begin{align}
\left\lbrace\begin{array}{rl}iu_t+\Delta u\mp |u|^{\alpha}u&=0\\
u|_{t=0}&=\phi.
\end{array}\right.\label{equation}
\end{align}
with $\alpha\in 2\mathbb{N}$ in the defocusing case, posed on the two-dimensional unit ball $B_2\subset\mathbb{R}^2$, and with $\alpha=\frac{4}{d}$ in the focusing case, posed on the $d$-dimensional unit ball $B_d\subset \mathbb{R}^d$, $d\geq 2$.  In both cases, we prescribe Dirichlet boundary conditions $u(t)=0$ on $\partial B_d$ for all $t\in\mathbb{R}$.  

In order to obtain results globally in time we will appeal to a probabilistic viewpoint, invoking the construction of an invariant Gibbs measure developed in the setting of nonlinear dispersive equations in the works \cite{B1,B2,B3}.  To motivate our discussion below, let us first recall that a Hamiltonian system of the form 
\begin{align}
\frac{d}{dt}\left[\begin{array}{c}p_i\\q_i\end{array}\right]_{i=1,\cdots,n}&=\left[\begin{array}{c}\partial H/\partial q_i\\\partial H/\partial p_i\end{array}\right]_{i=1,\cdots n}\label{fd-hamiltonian}
\end{align}
with $H=H(p_1,p_2,\cdots, p_n,q_1,q_2,\cdots, q_n)$ is subject to the following invariance property: the {\it Gibbs measure} 
\begin{align*}
e^{-H(p, q)}d\mathcal{L}^{2n}(p,q)
\end{align*}
satisfies
\begin{align}
\nonumber &\int_A e^{-H(p,q)}d\mathcal{L}^{2n}(p,q)\\
&\hspace{0.6in}=\int_{\{(p(t),q(t)):(p(0),q(0))\in A\}} e^{-H(p(t),q(t))}d\mathcal{L}^{2n}(p(t),q(t))\label{eq-invariance-1}
\end{align}
for every measurable set $A\subset \mathbb{R}^{2n}$ and $t\in\mathbb{R}$, where $\mathcal{L}^{2n}$ denotes Lebesgue measure and we use the abbreviation $p=(p_1,\cdots, p_n)$, $q=(q_1,\cdots,q_n)$.  

The relevance of this observation to our present study is that the equation in ($\ref{equation}$) is of the form $iu_t=\partial H/\partial \overline{u}$, with conserved Hamiltonian
\begin{align*}
H(\phi)=\frac{1}{2}\int_B |\nabla \phi|^2dx\pm \frac{1}{\alpha+2}\int_B |\phi|^{\alpha+2}dx.
\end{align*}

In order to access the invariance ($\ref{eq-invariance-1}$) of the Gibbs measure in this infinite dimensional setting, we shall consider a sequence of finite-dimensional projections of the problem ($\ref{equation}$), namely
\begin{align}
\left\lbrace\begin{array}{rl}iu_t+\Delta u\mp P_N(|u|^\alpha u)&=0\\
u|_{t=0}&=P_N\phi
\end{array}\right.\label{truncated}
\end{align}
for every integer $N\geq 1$, where $P_N$ denotes the frequency truncation operator defined via the relation
\begin{align*}
P_N\bigg(\sum_{n\in\mathbb{N}} a_ne_n(x)\bigg)=\sum_{\{n\in\mathbb{N}:z_n\leq N\}} a_ne_n(x),
\end{align*}
with $(e_n)$ the sequence of radial eigenfunctions and $(z_n^2)$ the sequence of associated eigenvalues of $-\Delta$ with vanishing Dirichlet boundary conditions.  

Solutions $u_N$ to ($\ref{truncated}$) exist globally in time and can be represented as
\begin{align*}
u_N(t,x)=\sum_{\{n\in\mathbb{N}:z_n\leq N\}} u_n(t)e_n(x),
\end{align*}
and the equation may be written in the form ($\ref{fd-hamiltonian}$) with
\begin{align*}
p_i=\textrm{Re}(u_n(t)),\quad q_i=\textrm{Im}(u_n(t)).
\end{align*}
The Hamiltonian associated to the finite-dimensional projected problem ($\ref{truncated}$) is then
\begin{align*}
H_N(\phi)=\frac{1}{2}\sum_{z_n\leq N} z_n^2|\widehat{\phi}(n)|^2\pm \frac{1}{\alpha+2}\int_B |P_N\phi(x)|^{\alpha+2}dx.
\end{align*}
Furthermore, the flow map
\begin{align*}
\phi_N\mapsto u_N(t)
\end{align*}
leaves invariant the Gibbs measure $\mu_G$ corresponding to ($\ref{truncated}$) defined by
\begin{align}
d\mu_G&=e^{-H_N(\phi))}d\phi=e^{\mp \frac{1}{\alpha+2}\lVert P_N\phi\rVert_{L_x^{\alpha+2}}^{\alpha+2}}d\mu_F^{(N)},\label{gibbs-truncated}
\end{align}
with $\mu_F^{(N)}$ denoting the free probability measure induced by the mapping
\begin{align*}
\omega\mapsto \frac{1}{\pi}\sum_{\{n\in\mathbb{N}:z_n\leq N\}} \frac{g_n(\omega)}{z_n}e_n,\quad \omega\in\Omega
\end{align*}
where 
$(g_n)$ is a sequence of normalized independent Gaussian random variables on a probability space $(\Omega, p,\mathcal{M})$.

As noted in \cite{B1,B-PCMI}, when writing (\ref{gibbs-truncated}) one must take care to ensure (i) the $\mu_F$-a.s. existence of the norm $\lVert P_N\phi\rVert_{L_x^{\alpha+2}}$ and (ii) the integrability of the density $e^{\mp\frac{1}{\alpha+2}\lVert P_N\phi\rVert_{L_x^{\alpha+2}}^{\alpha+2}}$ with respect to the measure $\mu_F$.  In both the defocusing and focusing cases, the first condition is satisfied as a consequence of estimates on the eigenfunctions.  On the other hand, while the second condition is trivial in the defocusing setting, it is not satisfied in general when focusing interactions are present.

In the setting of focusing periodic NLS on the one-dimensional torus, the non-integrability of the density was overcome in the work of Lebowitz-Rose-Speer \cite{LRS} by restriction to a ball in the conserved $L_x^2$ norm.  In particular, one fixes $\rho>0$ and considers
\begin{align*}
d\mu_G= e^{\frac{1}{\alpha+2}\lVert P_N\phi\rVert_{L_x^{\alpha+2}}^{\alpha+2}}\chi_{\{\lVert P_N\phi\rVert_{L_x^2}<\rho\}}(\phi)d\mu_F^{(N)}
\end{align*}
which is again invariant under the evolution and can be normalized to a well-defined measure for all $\alpha\leq 4$ (the case $\alpha=4$ requires $\rho$ sufficiently small).

\subsection{Main results of the present work}

In recent works \cite{BB-1,BB-3} (see also \cite{BB}), we addressed the Cauchy problems corresponding to ($\ref{equation}$) for the defocusing nonlinear wave and Schr\"odinger equations on the unit ball of $\mathbb{R}^3$, establishing global well-posedness results for radial solutions with random initial data according to the support of the Gibbs measure (almost surely in the randomization).  

We say that functions $u,u_N:I\times B_d\rightarrow\mathbb{C}$ are solutions of ($\ref{equation}$), ($\ref{truncated}$), respectively, if they belong to the class $C_t(I;H_x^\sigma(B_d))$ for some $\sigma<\frac{1}{2}$ and satisfy the associated integral equations
\begin{align}
u(t)=\phi\pm i\int_0^t e^{i(t-\tau)\Delta}[|u(\tau)|^\alpha u(\tau)]d\tau,\quad t\in I\label{duhamel-equation}
\end{align}
and
\begin{align}
u_N(t)=P_N\phi\pm i\int_0^t e^{i(t-\tau)\Delta}P_N[|u(\tau)|^\alpha u(\tau)]d\tau,\quad t\in I.\label{duhamel-truncated}
\end{align}

To proceed with our discussion, recall that we consider the sequence of finite-dimensional projections ($\ref{truncated}$).  Our estimates will typically be uniform in the truncation parameter $N$.  To accomodate this, we will often make use of the probability measure $\mu_F$ 
induced by the mapping
\begin{align*}
\omega\mapsto \phi^{(\omega)}:=\frac{1}{\pi}\sum_{n\in \mathbb{N}} \frac{g_n(\omega)}{z_n}e_n.
\end{align*}

Note that with this notation, one has $\mu_F^{(N)}=P_N[\mu_F]$.  Moreover, for each $N\geq 1$, the support of the Gibbs measure $\mu_G$ corresponds to the set
\begin{align*}
\{P_N\phi^{(\omega)}:\omega\in\Omega\}.
\end{align*}

With this probabilistic framework in mind, our first main result, concerning the defocusing problem, takes the following form:
\begin{theorem}
\label{thm-nls-2d}
Fix $\alpha\in 2\mathbb{N}$.  With the above notations, for $N\in \mathbb{N}$, $\omega\in \Omega$, let $u_N$ denote the solution to (\ref{truncated}) in the defocusing case on the two-dimensional unit ball with initial data $P_N\phi=P_N\phi^{(\omega)}$.  Then almost surely in $\Omega$, for every $0<T<\infty$ there exists $u_*\in C_t([0,T);H_x^s(B_2))$, $s<\frac{1}{2}$ such that $u_N$ converges to $u_*$ in $C_t([0,T);H_x^s(B_2))$.
\end{theorem}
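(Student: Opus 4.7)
The plan is to follow the invariant Gibbs measure paradigm of Bourgain, exploiting the additional a priori space-time bounds coming from $\mu_G$ in the spirit of \cite{BB-1}. The fundamental decomposition is $u_N = F_N + w_N$, where $F_N(t) = e^{it\Delta}P_N\phi^{(\omega)}$ is the free evolution of the randomized initial data (rough, at regularity just below $H^{1/2}$, but with good Gaussian tails) and $w_N$ is a smoother nonlinear remainder satisfying the Duhamel equation
\begin{equation*}
w_N(t) = \mp i \int_0^t e^{i(t-\tau)\Delta}P_N\bigl[|F_N+w_N|^\alpha(F_N+w_N)\bigr](\tau)\,d\tau.
\end{equation*}
The expected regularity gain is that $w_N$ lies in $H^\sigma_x$ for some $\sigma>0$, uniformly in $N$ on any fixed time interval.

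First I would establish probabilistic Strichartz-type bounds on $F_N$ uniformly in $N$: using the Gaussian structure of $\phi^{(\omega)}$ together with known estimates on the radial Dirichlet eigenfunctions $e_n$ on $B_2$, show that for any $\varepsilon>0$ and $T>0$ there is $\Omega_\varepsilon\subset\Omega$ with $p(\Omega\setminus\Omega_\varepsilon)<\varepsilon$ on which $\|F_N\|_{L_t^qL_x^r([0,T]\times B_2)}$ is controlled independently of $N$ for the Strichartz pairs needed to absorb $F_N$ factors in the nonlinearity. Next I would expand $|F_N+w_N|^\alpha(F_N+w_N)$ by the multinomial formula into at most $\alpha+1$ copies of $F_N$ or $w_N$, treating the pure-$F_N$ term by the probabilistic bound and the mixed terms by Hölder plus linear Strichartz for the $w_N$ factors, to obtain a short-time contraction in an $H^\sigma$-based resolution space. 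This produces a local existence time $\tau_0$ depending only on the probabilistic norms of $F_N$ (hence uniform in $N$ on $\Omega_\varepsilon$).

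The heart of the argument, and the principal obstacle, is promoting this local existence up to arbitrary $T<\infty$ uniformly in $N$. Here I would use the Bourgain globalization scheme: the truncated Gibbs measure $\mu_G^{(N)}$ is invariant under the flow of \eqref{truncated}, and the density $e^{\mp\frac{1}{\alpha+2}\|P_N\phi\|_{L^{\alpha+2}}^{\alpha+2}}$ with respect to the Wiener measure $\mu_F^{(N)}$ is uniformly bounded (in $N$) in $L^p(\mu_F^{(N)})$ since we are in the defocusing regime and the $L^{\alpha+2}$ norm is $\mu_F$-a.s. finite by eigenfunction bounds. Consequently, for the exceptional set $B_{N,\tau_0}$ of bad initial data at the local-existence scale, one has $\mu_F^{(N)}(B_{N,\tau_0})$ small, and by invariance and a union bound over $\lceil T/\tau_0\rceil$ time steps one controls the probability of failure on $[0,T]$. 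Passing the truncation $N\to\infty$ through the Fatou-type argument on the measures yields a set of full $\mu_F$-measure on which $u_N$ exists on $[0,T]$ with uniform bounds. A key subtlety is that the bounds used inside the iteration must be invariant under the flow (i.e., expressible as conservation-type quantities or integrable Strichartz norms against $\mu_G$), otherwise the invariance argument does not close; this is precisely where the additional a priori space-time bounds provided by Gibbs invariance are used.

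Finally, for convergence $u_N\to u_*$ in $C_t([0,T);H^s_x)$, I would run the same decomposition on the difference: write $u_N-u_M = (F_N-F_M) + (w_N-w_M)$, observe that $F_N-F_M\to 0$ almost surely in the relevant Strichartz norms by dyadic tail estimates on the Gaussian series, and set up the corresponding difference equation for $w_N-w_M$. The nonlinearity $|u_N|^\alpha u_N - |u_M|^\alpha u_M$ factors schematically as $(u_N-u_M)\cdot Q(u_N,u_M)$ with $Q$ a polynomial of degree $\alpha$, so the multilinear estimates from Step 1 apply verbatim, and the same invariant-measure globalization upgrades the local smallness of the difference to smallness on $[0,T]$, giving a Cauchy sequence in $C_t H^s_x$ whose limit is the desired $u_*$. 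The most delicate point throughout is that all multilinear and probabilistic estimates, as well as the density bound for $\mu_G^{(N)}$, must be established uniformly in $N$ so that the invariance machine produces $N$-independent conclusions.
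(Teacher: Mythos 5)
Your proposal relies on the Da~Prato--Debussche-type decomposition $u_N = F_N + w_N$ (free evolution plus smoother remainder), a local contraction for $w_N$, and Bourgain's globalization scheme. This is essentially the route of \cite{T}, which the paper explicitly says settles only the range $\alpha<4$. The content of Theorem~\ref{thm-nls-2d} is precisely the extension to all $\alpha\in2\mathbb{N}$, and for $\alpha\geq4$ the local contraction you are counting on does not close: at the regularity $s<1/2$ of the Gibbs support, the pure-$F_N$ term $|F_N|^\alpha F_N$ cannot be absorbed in an $H^\sigma$-based fixed point for $w_N$ using probabilistic Strichartz bounds on the free evolution alone, because the problem is genuinely supercritical there. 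This is a real gap, not a detail.

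The paper avoids the $F+w$ decomposition and any local well-posedness step altogether. It estimates the $X^{s,b}$-norm of the difference $u_{N_1}-u_{N_0}$ directly over short time intervals of length $\eta\sim B_{N_0}^{-4\alpha}$. The decisive input is Proposition~\ref{prop-prob}: $L_x^pL_t^q$ control of $(\sqrt{-\Delta})^\sigma u_N$ \emph{on the solution itself} (for $\sigma<\tfrac12$, $p<2/\sigma$, all $q<\infty$), obtained by pushing Gaussian tail bounds from $t=0$ forward via invariance of $\mu_G$. These bounds are strictly stronger than what one can get on $F_N$ from Strichartz, and it is exactly this surplus that lets the nonlinear estimates close for arbitrary even $\alpha$. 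Two further pillars absent from your plan: the Duhamel difference is decomposed as $(u_{N_1}-u_{N_0})F_\pm$ with $F_\pm$ of degree $\alpha$, then split by a paraproduct in frequency, with the high/low interaction treated by the bilinear estimate of Proposition~\ref{propP0}; and the residual piece living at frequencies between $N_0$ and $N_1$ is handled by Lemma~\ref{lem35} together with the fractional product rule and the a~priori bounds. Your general instinct — probabilistic $L^p_xL^q_t$ bounds, uniformity in $N$, invariance to globalize, Cauchy-in-$N$ argument for convergence — is aligned with the paper, but the specific mechanism you propose would not survive past $\alpha=2$, and you are missing both Proposition~\ref{prop-prob} applied to $u_N$ itself and Proposition~\ref{propP0}.
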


We remark that Theorem $\ref{thm-nls-2d}$ was announced in \cite{BB}.  The restriction on the nonlinearity to $\alpha\in 2\mathbb{N}$ is by no means essential, and serves only to simplify the estimates on the nonlinearity, avoiding technicalities due to fractional powers.  The case $\alpha<4$ was treated in \cite{T}.

The proof of Theorem $\ref{thm-nls-2d}$ further develops the method of \cite{BB-1} and consists of an analysis of convergence properties of solutions to the truncated equations ($\ref{truncated}$).  In order to perform this analysis, we will make use of three key ingredients:
\begin{itemize}
\item[(i)] A detailed study of embedding properties associated to the Fourier restriction spaces $X^{s,b}$ (see Lemma $\ref{lem26}$ and Lemma $\ref{lem35}$),
\item[(ii)] A probabilistic estimate demonstrating how the randomization procedure leads to additional $L_x^pL_t^q$  control, almost surely in the probability space (see Proposition $\ref{prop-prob}$), and
\item[(iii)] A bilinear estimate of the nonlinearity enabling one to estimate interactions of high and low frequencies, allowing for a paraproduct-type analysis in the present setting (see Proposition $\ref{propP0}$).
\end{itemize}

The embeddings established in Lemma $\ref{lem26}$ and Lemma $\ref{lem35}$ use frequency decomposition techniques, exploiting the product structure inherent in the $L_t^4$ norm and the Plancherel identity.  A technical tool used to estimate the frequency interactions at this stage are arithmetic estimates for the counting of lattice points on circles (see in particular Lemma $\ref{lem-nt}$).

On the other hand, the probabilistic $L_x^pL_t^q$ bounds of Proposition $\ref{prop-prob}$ make essential use of the fact that $u_N$ is a solution of ($\ref{truncated}$).  More precisely, the improvement in integrability follows from the invariance of the Gibbs measure and bounds for functions belonging to its support.

Turning to the bilinear estimate Proposition $\ref{propP0}$, the $X^{s,b}$ norm of certain products in the Duhamel formula are estimated by $X^{s,b}$ and $L_{t}^2H_x^{\gamma}$ norms  ($\gamma>0$ small) of its factors -- this involves appropriate high and low frequency localizations.  The proof of this proposition is in the flavor of similar estimates in the $\mathbb{R}^d$ setting, with the additional component that the usual convolution identities are replaced with estimates on the correlation of eigenfunctions.

To conclude the proof of Theorem $\ref{thm-nls-2d}$, the ingredients (i), (ii) and (iii) are combined in order to show that the approximate solutions $u_N$ almost surely converge in the space $X^{s,b}$ via a bootstrap-type argument.  We refer the reader to Section $5$ for the full details of the argument.

Our second main result, treating the focusing problem, is as follows. 
\begin{theorem}
\label{thm-focusing}
Set $\alpha=2$.  For each $N\in \mathbb{N}$, $\omega\in \Omega$ let $u_N$ denote the solution to (\ref{truncated}) in the focusing case on the two dimensional unit ball with initial data $P_N\phi=P_N\phi^{(\omega)}$ and subject to an appropriate $L^2$-norm restriction.  Then for every $0<T<\infty$, there exists almost surely $u_*\in C_t([0,T);H_x^s(B_2))$, $s<\frac{1}{2}$, such that $u_N$ converges to $u_*$ in $C_t([0,T);H_x^s(B_2))$.
\end{theorem}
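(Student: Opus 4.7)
\textbf{Proof proposal for Theorem \ref{thm-focusing}.}

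The deterministic building blocks behind Theorem \ref{thm-nls-2d} — the $X^{s,b}$ embeddings of Lemmas \ref{lem26}, \ref{lem35}, the bilinear nonlinear estimate of Proposition \ref{propP0}, and the bootstrap closing the Cauchy argument in $X^{s,b}$ — are insensitive to the sign of the nonlinearity and transfer verbatim to the focusing problem. Only the probabilistic estimate of Proposition \ref{prop-prob} depends on the sign, through its reliance on invariance of the Gibbs measure and the trivial bound by $1$ on its density with respect to $\mu_F^{(N)}$. My plan is therefore to build a focusing Gibbs measure with an $L^2$-cutoff \`a la Lebowitz-Rose-Speer, establish a uniform $L^p(d\mu_F)$ bound on its density, and then deduce the theorem from the defocusing machinery by an absolute-continuity argument.

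Fix $\rho>0$ to be chosen small and set
\[
d\mu_{G,N} \;=\; Z_N^{-1}\, e^{\frac{1}{4}\|P_N\phi\|_{L^4}^4}\,\chi_{\{\|P_N\phi\|_{L^2}<\rho\}}\,d\mu_F^{(N)}.
\]
The central analytic step is a uniform bound
\[
\sup_{N\geq 1}\, \bigl\| e^{\frac{1}{4}\|P_N\phi\|_{L^4}^4}\,\chi_{\{\|P_N\phi\|_{L^2}<\rho\}}\bigr\|_{L^p(d\mu_F)} \;\leq\; C(p,\rho) \;<\; \infty
\]
for some $p>1$. Following \cite{LRS}, the sharp Gagliardo-Nirenberg inequality
\[
\|\psi\|_{L^4(B_2)}^4 \;\leq\; C_{\mathrm{GN}}\, \|\psi\|_{L^2(B_2)}^2\, \|\nabla\psi\|_{L^2(B_2)}^2
\]
replaces the exponent $\|P_N\phi\|_{L^4}^4$ by $C_{\mathrm{GN}}\rho^2\|\nabla P_N\phi\|_{L^2}^2$ on the cutoff set. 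Finiteness then follows from the formal identity $d\mu_F^{(N)}\propto e^{-\frac{1}{2}\|\nabla P_N\phi\|_{L^2}^2}\,d\phi$ on the $N$-dimensional truncation subspace, which reduces the question to Gaussian integrability of $e^{(pC_{\mathrm{GN}}\rho^2/4 - 1/2)\|\nabla P_N\phi\|_{L^2}^2}$; this is uniform in $N$ precisely when $pC_{\mathrm{GN}}\rho^2 < 2$, exhibiting the phase transition referred to in the introduction.

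Given such a uniform density bound, $\mu_{G,N}\ll \mu_F^{(N)}$ with Radon-Nikodym derivative uniformly bounded in $L^p(d\mu_F)$, so H\"older's inequality transfers any quantitative $\mu_F$-almost-sure statement into a $\mu_{G,N}$-almost-sure one with constants independent of $N$. The Hamiltonian structure of ($\ref{truncated}$) together with conservation of $\|P_N u_N(t)\|_{L^2}^2$ under the truncated flow yields invariance of $\mu_{G,N}$, and since the proof of Proposition \ref{prop-prob} uses only invariance plus such a uniform density bound, the probabilistic $L_x^p L_t^q$ estimate for $u_N$ carries over to the focusing setting. Rerunning the $X^{s,b}$ bootstrap of Section 5 verbatim then gives that $u_N$ is $\mu_{G,N}$-almost surely Cauchy in $X^{s,b}([0,T])\hookrightarrow C_t([0,T];H^s_x(B_2))$ for every $0<T<\infty$, hence almost surely under the free measure conditioned on $\{\|P_N\phi\|_{L^2}<\rho\}$. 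The principal obstacle is the first step: a quantitatively uniform-in-$N$ density estimate at the mass-critical exponent $\alpha=4/d$, which is precisely the two-dimensional radial Dirichlet analogue of the Lebowitz-Rose-Speer phase transition and where the sharpness of the threshold on $\rho$ must be tracked against the sharp Gagliardo-Nirenberg constant on $B_2$.
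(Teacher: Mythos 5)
The overall strategy you outline --- build the $L^2$-truncated focusing Gibbs measure, verify a uniform-in-$N$ bound on its density with respect to $\mu_F$, then rerun the defocusing $X^{s,b}$ machinery of Section 5 using absolute continuity --- matches the structure of the paper's proof. The second half is indeed exactly what the paper does: once Proposition~\ref{prop-focusing} is in hand, the convergence argument of Theorem~\ref{thm-nls-2d} for $\alpha=2$ applies unchanged. The gap is in your first step, which is precisely where the paper invests all its effort.

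The Gagliardo--Nirenberg reduction does not establish the uniform density bound. On the cutoff set $\{\|P_N\phi\|_{L^2}<\rho\}$ you bound $\|P_N\phi\|_{L^4}^4\le C_{\mathrm{GN}}\rho^2\|\nabla P_N\phi\|_{L^2}^2$, and then appeal to Gaussian integrability of $e^{c\|\nabla P_N\phi\|_{L^2}^2}$ for small $c>0$. But under $\mu_F^{(N)}$ the coefficients are $\widehat{\phi}(n)=g_n/(\pi z_n)$, so $\|\nabla P_N\phi\|_{L^2}^2=\pi^{-2}\sum_{z_n\le N}|g_n|^2$ is a sum of order-$N$ independent exponential variables, and
\[
\mathbb{E}_{\mu_F^{(N)}}\Bigl[\,e^{c\|\nabla P_N\phi\|_{L^2}^2}\Bigr]=\prod_{z_n\le N}\frac{1}{1-c/\pi^2},
\]
which diverges exponentially in $N$ for every $c>0$. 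So the bound you derive is finite for each fixed $N$ but blows up as $N\to\infty$; it gives no information about the limit. In other words, the GN step exchanges a quantity whose moments are genuinely uniformly controlled (thanks to the good $L^p$ bounds on the radial eigenfunctions) for one whose exponential moments are catastrophically non-uniform. This is exactly the delicacy of the mass-critical case, and the reason the 1D $\alpha=4$ result in \cite{LRS} is nontrivial; their argument is not a GN-plus-Gaussian-integrability reduction either. The condition $pC_{\mathrm{GN}}\rho^2<2$ you exhibit is not the phase-transition threshold --- it is the condition for a quantity to be finite for each fixed $N$, which is not what is needed.

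The paper's Proposition~\ref{prop-focusing} instead proves the tail estimate $\mu_F(\{\|\phi\|_{L^4}>\lambda,\,\|\phi\|_{L^2}<\rho\})\lesssim e^{-C\lambda^4}$ with $C>1$ for $\rho$ small, which is what integrability of $e^{\frac14\|\phi\|_{L^4}^4}$ on the cutoff set actually requires. The mechanism is a double dyadic decomposition: frequency annuli $n\sim M$ and, within each, radial shells $|x|\sim 2^k/M$. On each shell one has the deterministic bound $\|\sum_{n\sim M}\frac{g_n}{z_n}e_n\|_{L^4(|x|\sim 2^k/M)}\lesssim 2^{-k/4}M^{1/2}\|\phi\|_{L^2}$ from Bessel asymptotics, while $\mathbb{E}_\omega\|\sum_{n\sim M}\frac{g_n}{z_n}e_n\|_{L^4(|x|\sim 2^k/M)}\sim M^{-1/2}$; combining the $L^2$ cutoff with the Gaussian concentration bound (\ref{f-8}) then forces the event $\{\|\phi\|_{L^4}>\lambda\}$ into a very large deviation regime and produces the $e^{-c\rho^{-2}\lambda^4}$ tail. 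This is where the smallness of $\rho$ enters, and it is the genuinely new analytic content of the focusing theorem; the step cannot be replaced by the GN argument you propose.
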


The main additional issue in the proof of Theorem $\ref{thm-focusing}$ is to show the $\mu_F$-integrability of the map
\begin{align*}
\phi\mapsto e^{\frac{1}{\alpha+2}\lVert \phi\rVert_{L_x^{\alpha+2}}^{\alpha+2}}\chi_{\{\lVert P_N\phi\rVert_{L_x^2}<\rho\}}(\phi)
\end{align*}
provided that $\rho>0$ is chosen sufficiently small.  This result is stated in Proposition $\ref{prop-focusing}$, and ensures a bound on the $L_x^2$-truncated Gibbs measures 
\begin{align*}
d\mu_G^{(N)}=e^{\frac{1}{\alpha+2}\lVert \phi\rVert_{L_x^{\alpha+2}}^{\alpha+2}}\chi_{\{\lVert P_N\phi\rVert_{L_x^2}<\rho\}}(\phi)d\mu_F^{(N)}
\end{align*}
Once we have the invariant measure at our disposal, the convergence of the solutions of the truncated equations follows from the same argument as in the defocusing case for $\alpha=2$, leading to a well-defined dynamics on the support of the modified Gibbs measure.

For subscritical nonlinearity $\alpha<2$, the corresponding result was established in \cite{T2} (with arbitrary $L^2$-truncation).

In Remark \ref{remark-s5} in $\S5$, we will also comment on what happens for larger $L^2$-norm restriction $\rho$.

\subsection*{Outline of the paper}

The remainder of this paper is structured as follows: in Section $2$ we establish our notation and recall the definitions of the function spaces which will be used in the remainder of the paper.  Section $3$ is then devoted to the proof of a probabilistic estimate for solutions corresponding to initial data in the support of the Gibbs measure.  In Section $4$, we establish a key bilinear estimate on the nonlinearity, while the proof of Theorem $\ref{thm-nls-2d}$ is contained in Section $5$.  We conclude by establishing Theorem $\ref{thm-focusing}$ in Section $6$.

\section{Preliminaries}

\subsection{Notation}
Let us now establish some brief notational conventions.  Unless otherwise indicated, we will use the conventions $n\in\mathbb{N}$, $m\in\mathbb{Z}$, while capital letters $K$, $N$ and $M$ shall denote dyadic integers of the form $2^k$, $k\geq 0$.  Throughout our arguments we will frequently make use of a dyadic decomposition in frequency, writing
\begin{align*}
f(x)=\sum_n \hat{f}(n)e_n(x)=\sum_{N\geq 1}\sum_{n\sim N} \hat{f}(n)e_n(x),
\end{align*}
where for each dyadic integer $N$, the condition $n\sim N$ is characterized by $N\leq n<2N$ (likewise, we say $m\sim M$ if $M\leq |m|\leq 2M$).  We shall also use the notation $\langle x\rangle=(1+|x|^2)^{1/2}$.

For each $n\in\mathbb{N}$, let $z_n\in\mathbb{R}\setminus \{0\}$ be such that $z_n^2$ is the $n$th eigenvalue of the Dirichlet Laplacian on $B_2$, and recall that $z_n$ satisfies
\begin{align}
z_n=\pi\left(n-\tfrac{1}{4}\right)+O\left(\tfrac{1}{n}\right).\label{eq_asymp}
\end{align}
Following the usual convention, we will often refer to $(z_n)$ as the sequence of frequencies for functions defined on the ball $B_2$.  Moreover, let $e_n$ denote the $n$th radial eigenfunction, corresponding to the eigenvalue $z_n^2$.  One then has
\begin{align}
\begin{array}{ll}\lVert e_n\rVert_{L_x^p}\lesssim 1,&p\in [2,4),\\
\lVert e_n\rVert_{L_x^p}\lesssim \log(2+n)^{1/4},&p=4,\\
\lVert e_n\rVert_{L_x^p}\lesssim n^{\frac{1}{2}-\frac{2}{p}},&p\in (4,\infty).\end{array}\label{efn-bds}
\end{align}

We now state a basic probabilistic estimate for Gaussian random variables.  In particular, if $(g_n)$ is a sequence of independent (normalized) complex Gaussians, then we have
\begin{align}
\bigg\lVert \sum_{n} \alpha_ng_n(\omega)\bigg\rVert_{L^q(d\omega)}&\lesssim \sqrt{q}\big(\sum_{n} |\alpha_n|^2\big)^{1/2}.\label{prob-est}
\end{align}

Moreover, if $X(\omega)$ is a Gaussian process with values in some normed space $(E,\lVert\cdot\rVert)$, of finite expectation $\mathbb{E}_\omega\left[\,\lVert X\rVert\,\right]$, it follows that
\begin{align*}
\int e^{c\left(\frac{\lVert X\rVert}{\mathbb{E}[\lVert X\rVert]}\right)^2}<C
\end{align*}
and hence
\begin{align}
\mathbb{P}_\omega\Big[\,\lVert X\rVert>t\,\mathbb{E}_\omega\left[\,\lVert X\rVert\,\right]\,\Big]\lesssim e^{-ct^2},\quad t>1.\label{f-8}
\end{align}

The results and analysis in this section appear basically in \cite{T} and are repeated here in a form suitable for our presentation and in the interest of being self-contained.

\subsection{Arithmetic estimates}

As usual in the study of nonlinear Schr\"odinger equations on bounded domains (e.g. the case of tori treated in \cite{B1}, \cite{B2}) an essential component of our analysis will rely upon arithmetical bounds for the sequence of frequencies.  In particular, we shall use the following:
\begin{lemma}
\label{lem-nt}
There exists $c>0$ such that for every $R>R_1\gg 1$ and all boxes $Q\subset\mathbb{R}^2$ of size $R_1$, we have
\begin{align}
\Big|\Big\{(n_1,n_2)\in\mathbb{Z}^2:n_1^2+n_2^2=R^2, (n_1,n_2)\in Q\Big\}\Big|\lesssim \exp\bigg(c\frac{\log R_1}{\log\log R_1}\bigg)\label{jarnick-1}
\end{align}
\end{lemma}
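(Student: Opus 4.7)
The bound \eqref{jarnick-1} is a classical Jarn\'ik-type estimate on the number of lattice points on a circle restricted to a small box, and my plan is to prove it by reducing to a divisor-counting question in the Gaussian integers $\mathbb{Z}[i]$ and invoking the Wigert bound
\begin{equation*}
d(n) \lesssim \exp\bigl(c\,\tfrac{\log n}{\log\log n}\bigr), \qquad n \geq 2.
\end{equation*}

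First, I identify each lattice point $(n_1,n_2)$ satisfying $n_1^2+n_2^2 = R^2$ with the Gaussian integer $z := n_1+in_2 \in \mathbb{Z}[i]$, so that the circle equation reads $z\bar z = R^2$. This exhibits each such $z$ as a Gaussian-integer divisor of $R^2$. Without any box constraint, the total count is therefore at most $4\,d_{\mathbb{Z}[i]}(R^2)\lesssim d(R^2)$, which Wigert bounds by $\exp(c\log R/\log\log R)$. This is already the desired estimate when $R_1\asymp R$, but is too weak when $R_1\ll R$.

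To exploit the box constraint, I fix any $z_0\in Q$ on the circle (the statement is trivial if no such $z_0$ exists) and, for every other lattice point $z\in Q$ with $|z|=R$, form the Gaussian integer $w := z\,\bar{z_0}\in \mathbb{Z}[i]$, which satisfies $|w|^2 = R^4$. The assignment $z\mapsto w$ is injective, and the identity $w - R^2 = (z-z_0)\,\bar{z_0}$ combined with $|z-z_0|\lesssim R_1$ yields $|w - R^2|\lesssim R R_1$. Writing $w = a+bi$ and using $a^2+b^2 = R^4$, this is equivalent to the thin-arc constraint
\begin{equation*}
|b|\lesssim R R_1, \qquad 0\leq R^2-a \lesssim R_1^2,
\end{equation*}
so $w$ lies in a short arc of the circle $|w|=R^2$ near the positive real axis.

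The essential step is then to count such constrained $w$. Setting $\delta := (w-R^2)/\bar{z_0} = z - z_0$, one has $\delta\in\mathbb{Z}[i]$ with $|\delta|^2 \lesssim R_1^2$. The plan is to compare the Gaussian prime factorizations of $w$ (a divisor of $R^4$) and of $\delta$ (a Gaussian integer of small norm), so that the geometric constraint forces $w$ to differ from $R^2$ only by multiplication/division by Gaussian primes of norm $\lesssim R_1^2$. This reduces the count to the number of Gaussian-integer divisors of an integer of size $\lesssim R_1^2$, and a final application of Wigert's bound with $n\sim R_1^2$ produces the right-hand side of \eqref{jarnick-1}.

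The main obstacle is precisely this last reduction. The constraints $|b|\lesssim R R_1$ and $R^2-a\lesssim R_1^2$ are purely geometric and follow immediately from the box; the number-theoretic content consists in translating them into a divisibility statement that sees only Gaussian primes of norm at most $O(R_1^2)$, rather than all primes dividing $R$. A naive bound applying Wigert to $R^2$ or $R^4$ only gives $\exp(c\log R/\log\log R)$, so extracting the sharper dependence on $R_1$ requires using the small norm of $\delta = z-z_0$ at the level of the Gaussian prime decomposition, which is the delicate part of the argument.
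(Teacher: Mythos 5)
There is a genuine gap, and you've flagged it yourself: the last paragraph of your proposal acknowledges that the reduction from the thin-arc constraint to a divisor count of size $\lesssim R_1^2$ is unresolved. That reduction is indeed the whole difficulty, and the envisioned strategy — comparing the Gaussian prime factorization of $w = z\bar z_0$ (a divisor of $R^4$) to that of $\delta = z - z_0$ (small norm) — does not obviously work. The Gaussian primes dividing $w$ are the primes dividing $R$, and there is no reason the constraint $|w - R^2| \lesssim RR_1$ should confine $w/R^2$ or $w - R^2$ to a product of primes of norm $\lesssim R_1^2$; divisors of $R^4$ that lie near $R^2$ on the circle $|w| = R^2$ can still involve large prime factors of $R$. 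So the proposal as written stops short of a proof.

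The paper's argument is simpler and sidesteps this entirely, via a dichotomy in the size of $R_1$. If $R_1 \gtrsim R^{1/3}$, then $\log R_1 \sim \log R$, and one can throw away the box constraint altogether: the global bound on all lattice points on the circle (i.e. the Gaussian-integer divisor count of $R^2$, which you also invoke) already gives $\exp(c\log R/\log\log R) \lesssim \exp(c'\log R_1/\log\log R_1)$. If instead $R_1 \lesssim R^{1/3}$, then the box $Q$ is short enough that Jarn\'{\i}k's theorem on lattice points on circular arcs applies directly and caps the count at an absolute constant (at most $2$ in the form used by the paper), with no factorization analysis at all. Neither branch needs the thin-arc divisibility reduction you propose. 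To fix your proposal, replace the final paragraph with this dichotomy: use your Wigert/Gaussian-divisor bound on the whole circle when $R_1$ and $R$ have comparable logarithms, and invoke Jarn\'{\i}k's arc theorem when $R_1$ is small relative to $R$.
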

\begin{proof}
Let $R_1<R$ be given.  Suppose first that $R_1$ and $R$ satisfy $R^{1/3}\lesssim R_1$.  Note that factorization in the Gaussian integers $\mathbb{Z}+i\mathbb{Z}$ implies the bound
\begin{align}
\nonumber \Big|\Big\{ (n_1,n_2)\in\mathbb{Z}^2:n_1^2+n_2^2=R^2\Big\}\Big|&\leq \exp\Big(\Big|\Big\{\textrm{Gaussian prime factors of}\, R^2\Big\}\Big|\Big)\\
&<\exp\,\frac{\log R}{\log\log R}.\label{eq-AABA1}
\end{align}
Since $\log R\sim\log R_1$, the inequality ($\ref{jarnick-1}$) now follows from ($\ref{eq-AABA1}$).

On the other hand, suppose that $R_1\lesssim R^{1/3}$.  It then follows by Jarnick's theorem for lattice points on circles that the left-hand side of ($\ref{jarnick-1}$) is at most $2$, which gives the claim in this case.
\end{proof}

\begin{lemma}
\label{lem-nt-2}
Let $z_n^2$ be the $n$th eigenvalue of the Dirichlet Laplacian on $B_2$ and let $Q\subset\mathbb{R}^2$ be a box of size $R_1$.  Then for any $\ell\in \mathbb{R}_+$,
\begin{align*}
\left|\left\{(n_1,n_2)\in \mathbb{Z}^2:|z_{n_1}^2+z_{n_2}^2-\ell|<1,\,(n_1,n_2)\in Q\right\}\right|\lesssim \exp\left(c\frac{\log R_1}{\log\log R_1}\right).
\end{align*}
\end{lemma}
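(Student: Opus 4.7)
The plan is to reduce Lemma \ref{lem-nt-2} to Lemma \ref{lem-nt} via the Weyl-type asymptotic \eqref{eq_asymp}. Squaring $z_n = \pi(n - \tfrac{1}{4}) + O(1/n)$ gives $z_n^2 = \pi^2(n - \tfrac{1}{4})^2 + O(1)$ uniformly in $n \geq 1$, so the condition $|z_{n_1}^2 + z_{n_2}^2 - \ell| < 1$ forces
\begin{align*}
\bigl|\pi^2 (n_1 - \tfrac{1}{4})^2 + \pi^2 (n_2 - \tfrac{1}{4})^2 - \ell\bigr| \leq C_0
\end{align*}
for some absolute constant $C_0$.

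Next I would clear the shifts by setting $m_i = 4 n_i - 1 \in \mathbb{Z}$ and multiplying through by $16/\pi^2$, which turns the inequality into the sum-of-two-squares condition
\begin{align*}
\bigl| m_1^2 + m_2^2 - 16\ell/\pi^2 \bigr| \leq C_1,
\end{align*}
while the box condition $(n_1, n_2) \in Q$ rescales to $(m_1, m_2) \in Q'$ for some box $Q'$ of size $4 R_1$. Because $m_1^2 + m_2^2$ takes only nonnegative integer values, the admissible set $\{k \in \mathbb{Z}_{\geq 0} : |k - 16\ell/\pi^2| \leq C_1\}$ has cardinality at most $2 C_1 + 1$, uniformly in $\ell$ and $R_1$. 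This is the decisive observation: the thickened shifted annulus in the $(n_1, n_2)$-plane breaks up into $O(1)$ exact integer circles in the $(m_1, m_2)$-plane.

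I would then apply Lemma \ref{lem-nt} to each such $k$ with $R = \sqrt{k}$ and with the box-size parameter $4R_1$, obtaining $\exp(c \log R_1 / \log\log R_1)$ lattice points of $m_1^2 + m_2^2 = k$ lying in $Q'$, and sum over the $O(1)$ values of $k$ to conclude. The main obstacle I anticipate is the boundary case where the hypothesis $R > R_1$ of Lemma \ref{lem-nt} fails, i.e.\ when $\ell \lesssim R_1^2$. In that regime I would instead fall back on the unconditional Gaussian-prime divisor bound $r_2(k) \leq \exp(c \log k / \log\log k)$ for the number of representations of $k$ as a sum of two squares; since $\log k \lesssim \log R_1$ in this range, the same final estimate follows up to constants. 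Apart from this edge case, the reduction is essentially mechanical once the substitution $m_i = 4 n_i - 1$ is in place.
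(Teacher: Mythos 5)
Your proposal is correct and follows essentially the same route as the paper: use the asymptotic $z_n^2 = \pi^2(n-\tfrac14)^2 + O(1)$, substitute $m_i = 4n_i - 1$, and reduce to Lemma \ref{lem-nt}. You are a bit more explicit than the paper in noting that the $O(1)$-thick condition splits into $O(1)$ exact integer circles and in handling the edge case $R \lesssim R_1$ (where one falls back on the unconditional divisor bound for $r_2$), but these are details the paper leaves implicit rather than a different argument.
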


\begin{proof}
According to (\ref{eq_asymp}), $z_n^2=\pi^2\left(n-\frac{1}{4}\right)^2+O(1)$.  Therefore the eqaution $|z_{n_1}^2+z_{n_2}^2-\ell|<1$ implies
\begin{align*}
\left|\pi^2\left(n_1-\frac{1}{4}\right)^2+\pi^2\left(n_2-\frac{1}{4}\right)^2-\ell\right|&<O(1)\\
\left|(4n_1-1)^2+(4n_2-1)^2-\frac{16\ell}{\pi^2}\right|&<O(1)
\end{align*}
and we can apply Lemma \ref{lem-nt} setting $n'_1=4n_1-1$, $n'_2=4n_2-1$.
\end{proof}

\subsection{Description of the $X^{s,b}$ spaces}

Fix $I=[0,T)$ with $0<T<\frac{1}{2}$, and let $X^{s,b}(I)$ denote the class of functions $f:I\times B\rightarrow\mathbb{C}$ representable as
\begin{align}
f(t,x)=\sum_{n,m} f_{n,m}e_n(x)e(mt),\quad (t,x)\in I\times B\label{s2e1}
\end{align}
for which the norm
\begin{align*}
\lVert f\rVert_{{s,b}}:=\inf \,\bigg(\sum_{n,m} \langle z_n\rangle^{2s}\langle z_n^2-m\rangle^{2b}|f_{n,m}|^2\bigg)^{1/2}
\end{align*}
is finite, with the infimum taken over all representations ($\ref{s2e1}$).  Throughout the remainder of the paper, we will assume $0<T<\frac{1}{2}$, unless otherwise indicated.

We now give two lemmas expressing some embeddings of the space $X^{s,b}$ which will be essential components of our analysis below.  Similar estimates appear already in \cite{T} (see in particular \cite[Proposition $4.1$]{T}).
\begin{lemma}
\label{lem26}
Let $\frac{1}{4}<b<1$ and $2\leq p<4$ be given.  Then, letting $P_If=\sum_{z_n\in I} \widehat{f}(n)e_n$, we have for $\epsilon>0$, $f\in\mathcal{S}$ and intervals $I\subset \mathbb{R}$,
\begin{align}
\lVert P_If\rVert_{L_x^pL_t^4}\lesssim \left\lbrace\begin{array}{ll}|I|^{\epsilon}\lVert P_If\rVert_{{0,b}}&\quad\textrm{for}\quad b>\frac{1}{2},\\
|I|^{1-2b+\epsilon}\lVert P_If\rVert_{{0,b}}&\quad\textrm{for}\quad b<\frac{1}{2}.{\vbox to 15pt {\vfil \hbox to 1cm{\  }\vfil}}\end{array}\right.
\label{eq-fn-bd-1}
\end{align}
\end{lemma}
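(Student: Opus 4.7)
The plan is to reduce the estimate to an $L^4_{t,x}$ bound by Hölder and Minkowski, then prove the latter by expanding in the time Fourier variable and exploiting the arithmetic counting estimate of Lemma~\ref{lem-nt-2}.

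Since $2\leq p<4$ and $B_2$ has finite volume, Hölder gives $\|g\|_{L^p_x}\lesssim\|g\|_{L^4_x}$, while Minkowski's integral inequality (applicable for $p\leq 4$) gives $\|g\|_{L^p_xL^4_t}\leq\|g\|_{L^4_tL^p_x}$. Combining these reduces the problem to estimating $\|P_If\|_{L^4_{t,x}}$.

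For this, I would dyadically decompose in modulation: write $P_If=\sum_L(P_If)^{(L)}$ with $(P_If)^{(L)}$ collecting the Fourier modes having $\langle z_n^2-m\rangle\sim L$. Squaring and expanding in the time Fourier series,
\[|(P_If)^{(L)}|^2(t,x)=\sum_k G_k(x)\,e(kt),\]
and applying Plancherel in $t$ gives $\|(P_If)^{(L)}\|_{L^4_{t,x}}^4=\sum_k\|G_k\|_{L^2_x}^2$. Each $G_k(x)$ is a double sum over pairs $(n_1,n_2)$ with $z_{n_1},z_{n_2}\in I$ subject to $|z_{n_1}^2-z_{n_2}^2-k|\lesssim L$, which is forced by the modulation support. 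Partitioning this $L$-shell into $O(L)$ unit shells and applying Lemma~\ref{lem-nt-2} bounds the number of off-diagonal ($n_1\neq n_2$) such pairs by $\lesssim L\,|I|^\epsilon$. Combining this count with Cauchy-Schwarz in $(n_1,n_2)$, the eigenfunction bound $\|e_{n_1}e_{n_2}\|_{L^2}\lesssim(\log|I|)^{1/2}$ from (\ref{efn-bds}), and Young's inequality applied to the $k$-sum to get $\sum_k\sum_{n_1,n_2}|B_{n_1,n_2}(k)|^2\lesssim L\,\|(P_If)^{(L)}\|_{0,0}^4$ (where $B_{n_1,n_2}(k)$ is the coefficient of $e_{n_1}(x)e_{n_2}(x)$ in $G_k$), yields
\[\|(P_If)^{(L)}\|_{L^4_{t,x}}\lesssim L^{1/2}|I|^\epsilon\|(P_If)^{(L)}\|_{0,0}\lesssim L^{1/2-b}|I|^\epsilon\|(P_If)^{(L)}\|_{0,b}.\]
Summing over dyadic $L$ via Cauchy-Schwarz: for $b>1/2$ the series $\sum_L L^{1-2b}$ converges and we obtain $\|P_If\|_{L^4_{t,x}}\lesssim|I|^\epsilon\|P_If\|_{0,b}$; for $b<1/2$, truncating at the natural scale $L\lesssim|I|^2$ (since spatial frequencies in $I$ satisfy $z_n\lesssim|I|$, so $z_n^2\lesssim|I|^2$) gives $\sum_{L\lesssim|I|^2}L^{1-2b}\lesssim|I|^{2(1-2b)}$, producing the factor $|I|^{1-2b+\epsilon}$.

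The main obstacle I anticipate is the diagonal contribution $n_1=n_2$ to the pair sum, whose constraint set has cardinality $\sim|I|$ rather than $|I|^\epsilon$, so that Lemma~\ref{lem-nt-2} does not directly apply. These diagonal terms, however, can be treated separately: the diagonal piece of $|(P_If)^{(L)}|^2$ is $\sum_n|h_n^{(L)}(t)|^2 e_n(x)^2$, whose $L^2_{t,x}$-norm is estimable via Bernstein's inequality in $t$ (exploiting that $h_n^{(L)}$ has time-Fourier support of size $L$) together with the $L^4$ eigenfunction bound, giving only a $L^{1/4}$-type contribution that is dominated by the off-diagonal estimate for $L\geq 1$. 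Careful bookkeeping of the logarithmic losses from the eigenfunction $L^4$-bound and the unit-shell partitioning is the most delicate technical step in the argument.
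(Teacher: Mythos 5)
Your overall plan — expanding the square, using Plancherel in $t$, and invoking the lattice-counting lemma together with a dyadic modulation decomposition — is aligned with the paper's strategy, but the execution has two genuine gaps that would invalidate the argument.

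\textbf{1. The reduction to $L^4_{t,x}$ loses an uncontrolled log.} The passage $\lVert P_If\rVert_{L_x^pL_t^4}\lesssim\lVert P_If\rVert_{L^4_{t,x}}$ via Minkowski and H\"older is formally correct, but it discards precisely the room in $p<4$ that the argument needs. After squaring and Plancherel in $t$, the $x$-integral forces one to estimate $\lVert e_{n_1}e_{n_2}\rVert_{L^{2}_x}$ (if one works in $L^4_{t,x}$) versus $\lVert e_{n_1}e_{n_2}\rVert_{L^{p/2}_x}$ with $p<4$. By (\ref{efn-bds}), $\lVert e_n\rVert_{L^p_x}\lesssim 1$ for $p<4$, whereas $\lVert e_n\rVert_{L^4_x}\lesssim\log(2+n)^{1/4}$. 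Your bound $\lVert e_{n_1}e_{n_2}\rVert_{L^2}\lesssim(\log|I|)^{1/2}$ is incorrect: the logarithmic factor depends on the \emph{location} of the frequency index $n$, not on the \emph{length} $|I|$. For $I=[R,R+R_1]$ with $R$ large and $R_1$ bounded, the indices $n$ with $z_n\in I$ satisfy $n\sim R$, so the correct factor is $\log(R)^{1/2}$, which is in no way dominated by $|I|^\epsilon = R_1^\epsilon$. This is fatal, and the only fix is to keep the $L^p_xL^4_t$ structure as in the paper: Plancherel in $t$ alone, then estimate the $L^{p/2}_x$ norm of the diagonal square function by $\sum_n|\widehat f|^2\lVert e_n\rVert_{L^p_x}^2$ with $p<4$, so that the eigenfunction factors are $O(1)$ uniformly.

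\textbf{2. The treatment of the case $b<1/2$ is wrong.} You truncate the modulation sum at $L\lesssim|I|^2$ on the grounds that ``$z_n\lesssim|I|$, so $z_n^2\lesssim|I|^2$''. This again conflates the length of $I$ with its supremum; $z_n\in I$ gives no bound on $z_n$ in terms of $|I|$. More fundamentally, the modulation $L=\langle z_n^2-m\rangle$ is unbounded regardless of where $I$ sits, so there is no natural cutoff at $L\sim|I|^2$ and the divergent sum $\sum_L L^{1-2b}$ for $b<1/2$ cannot be controlled this way. The paper obtains the second bound by a different route: for fixed modulation block $M$ it uses the triangle inequality in $n$ (Cauchy--Schwarz over the $\lesssim|I|$ indices gives $|I|^{1/2}$) together with Hausdorff--Young in $t$ (giving $M^{1/4}$), yielding the alternative bound $|I|^{1/2}M^{1/4-b}$. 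Taking the minimum of the two pointwise bounds $|I|^\epsilon M^{1/2-b}$ and $|I|^{1/2}M^{1/4-b}$ and summing over dyadic $M$ — the first controls small $M$, the second controls large $M$ since $b>1/4$ — produces the $|I|^{1-2b+\epsilon}$ factor. You need to supply this second, independent per-block bound; the truncation you propose does not exist.

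A smaller issue: your squaring of $|(P_If)^{(L)}|^2$ produces frequency differences $z_{n_1}^2-z_{n_2}^2$, whereas Lemma~\ref{lem-nt-2} is stated for sums. The paper avoids this by using $\lVert h\rVert_{L^4_t}^2=\lVert h^2\rVert_{L^2_t}$ (so the relevant frequency is $z_{n_1}^2+z_{n_2}^2$). A divisor-type bound for differences also holds, but you should either prove it or switch to $h^2$ as the paper does.
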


\begin{proof}
We begin by establishing the first inequality in ($\ref{eq-fn-bd-1}$), for which we shall compute the norm directly.  

Fix $\epsilon>0$ and write
\begin{align}
\nonumber P_If(t,x)=\sum_{\substack{m\in\mathbb{Z}\\z_n\in I}} \widehat{f}(m,n)e_n(x)e(mt)=\sum_{m}\bigg(\sum_{z_n\in I} \widehat{f}(m+[z_n^2],n)e_n(x)e(z_n^2t)\bigg)e(mt).
\end{align}
Performing a dyadic decomposition into intervals $m\sim M$, we obtain
\begin{align}
\nonumber \lVert P_If\rVert_{L_x^pL_t^4}&\lesssim \sum_M \lVert f_M\rVert_{L_x^pL_t^4}
\end{align}
with
\begin{align}
\nonumber f_M=\sum_{m\sim M} \sum_{z_n\in I} \widehat{f}(m+[z_n^2],n)e_n(x)e(z_n^2t)e(mt).
\end{align}

We then have
\begin{align}
\nonumber &\lVert f_M\rVert_{L_x^pL_t^4}\\
\nonumber &\hspace{0.2in}\lesssim \sum_{m\sim M} \bigg\lVert \sum_{\ell}\sum_{\substack{z_n,z_{n'}\in I\\|z_n^2+(z_{n'})^2-\ell|<1}} \widehat{f}(m+[z_n^2],n)\,\widehat{f}(m+[(z_{n'})^2],n')\, e_n(x)e_{n'}(x)e^{i\ell t}\bigg\rVert_{L_x^{p/2}L_t^2}^{1/2}\\
\nonumber &\hspace{0.2in}\lesssim \sum_{m\sim M} \bigg\lVert \bigg(\sum_{\ell}\bigg|\sum_{\substack{z_n,z_{n'}\in I\\|z_n^2+(z_{n'})^2-\ell|<1}} \widehat{f}(m+[z_n^2],n)\\
&\hspace{1.8in}\cdot\widehat{f}(m+[(z_{n'})^2],n')e_n(x)e_{n'}(x)\bigg|^2\bigg)^{1/2}\bigg\rVert_{L_x^{p/2}}^{1/2},\label{eq-cc-2}
\end{align}
where in obtaining the last inequality we have used the Plancherel identity in the $t$ variable.

Using the Cauchy-Schwarz inequality and Lemma $\ref{lem-nt-2}$,
\begin{align*}
(\ref{eq-cc-2})&\lesssim  \sum_{m\sim M} \bigg(\sup_{\ell}\sum_{\substack{z_n,z_{n'}\in I\\|z_n^2+(z_{n'})^2-\ell|<1}}1\bigg)^{1/4}\bigg\lVert \bigg(\sum_{z_n\in I} |\widehat{f}(m+[z_n^2],n)|^2e_n(x)^2\bigg)\bigg\rVert_{L_x^{p/2}}^{1/2}\\
&\lesssim \sum_{m\sim M} |I|^{\epsilon} \bigg(\sum_{z_n\in I} |\widehat{f}(m+[z_n^2],n)|^2\lVert e_n(x)\rVert_{L_x^{p}}^2\bigg)^{1/2}\\
&\lesssim \sum_{m\sim M} |I|^{\epsilon} \bigg(\sum_{z_n\in I} |\widehat{f}(m+[z_n^2],n)|^2\bigg)^{1/2}
\end{align*}
where to obtain the last inequality we used the eigenfunction estimate ($\ref{efn-bds}$).

Invoking the Cauchy-Schwarz inequality once more, 
\begin{align}
\nonumber (\ref{eq-cc-2})&\lesssim |I|^{\epsilon}M^{\frac{1}{2}}\bigg(\sum_{m\sim M} \sum_{z_n\sim I} |\widehat{f}(m+[z_n^2],n)|^2\bigg)^{1/2}\\
\nonumber &= |I|^{\epsilon}M^{\frac{1}{2}}\bigg(\sum_{\substack{z_n\sim I\\m-z_n^2\sim M}}  |\widehat{f}(m,n)|^2\bigg)^{1/2}\\
&\lesssim |I|^{\epsilon}M^{\frac{1}{2}-b}\lVert P_If\rVert_{{0,b}}\label{eq-cc-1}
\end{align}

On the other hand to obtain the second inequality in (\ref{eq-fn-bd-1}), a similar calculation yields
\begin{align}
\nonumber \lVert f_M\rVert_{L_x^pL_t^4}&\leq \sum_{z_n\in I} \bigg\lVert \sum_{m\sim M} \widehat{f}(m+[z_n^2],n)e(mt)\bigg\rVert_{L_t^4}\lVert e_n(x)\rVert_{L_x^p}\\
\nonumber &\leq M^{\frac{1}{4}}\sum_{z_n\in I} \bigg(\sum_{m-z_n^2\sim M} |\widehat{f}(m,n)|^2\bigg)^{1/2}\\
&\leq |I|^{1/2}M^{\frac{1}{4}-b}\lVert P_If\rVert_{{0,b}}\label{eq-cc-1b}
\end{align}

Thus from ($\ref{eq-cc-1}$), ($\ref{eq-cc-1b}$) 
\begin{align*}
\lVert f_M\rVert_{L_x^pL_t^4}\lesssim \min\{|I|^\epsilon M^{\frac{1}{2}-b},|I|^{\frac{1}{2}}M^{\frac{1}{4}-b}\}\lVert P_If\rVert_{0,b}
\end{align*}
and summation in $M$ gives the desired estimates.
\end{proof}

\begin{remark}
\label{rem1a}
The estimates obtained in Lemma $\ref{lem26}$ also allow us to conclude
\begin{align}
\nonumber \lVert f\rVert_{L_x^pL_t^4}&\leq C\lVert f\rVert_{{\epsilon,b}}\quad\textrm{for}\quad b\geq 1/2\,\,\textrm{and}\,\, p\leq 4 
\end{align}
and
\begin{align}
\nonumber \lVert f\rVert_{L_x^pL_t^4}&\leq C\lVert f\rVert_{{1-2b+\epsilon,b}}\quad\textrm{for}\quad 1/4<b<1/2. 
\end{align}

Indeed, appealing to the decomposition $f=\sum_N P_{[N,2N)}f$ with $N\geq 1$ dyadic and applying ($\ref{eq-fn-bd-1}$) on each interval $I=[N,2N)$ (with $\tilde{\epsilon}=\epsilon/2$), we obtain 
\begin{align*}
\lVert f\rVert_{L_x^pL_t^4}&\lesssim \sum_N N^{\epsilon/2}\lVert P_{[N,2N)}f\rVert_{{0,b}}\lesssim \sum_N N^{-\epsilon/2}\lVert f\rVert_{{\epsilon,b}}\lesssim \lVert f\rVert_{{\epsilon,b}}
\end{align*}
for $b\geq 1/2$.  An identical calculation with an application of the second inequality in ($\ref{eq-fn-bd-1}$) in place of the first inequality of ($\ref{eq-fn-bd-1}$) gives the claim for $1/4<b<1/2$.
\end{remark}

As a consequence of Lemma $\ref{lem26}$, we have the following estimates on the nonlinear term of the Duhamel formula ($\ref{duhamel-truncated}$).

\begin{lemma}
\label{lem35}
For each interval $I\subset\mathbb{R}$ and every $b>\frac{1}{2}$, $\epsilon>0$, there exists $C=C(b,\epsilon)>0$ such that
\begin{align}
\bigg\lVert P_I\bigg(\int_0^t e^{i(t-\tau)\Delta}f(\tau)d\tau\bigg)\bigg\rVert_{{0,b}}\leq C|I|^{2b-1+\epsilon}\lVert f\rVert_{L_x^{\frac{4}{3}+\epsilon}L_t^{\frac{4}{3}}}.\label{eq_aa1}
\end{align}

Moreover, the inequality
\begin{align}
\bigg\lVert \int_0^t e^{i(t-\tau)\Delta}f(\tau)d\tau\bigg\rVert_{{0,b}}\leq C\lVert (\sqrt{-\Delta})^{2b-1+\epsilon}f\rVert_{L_x^{\frac{4}{3}+\epsilon}L_t^{\frac{4}{3}}}\label{eq_aa2}
\end{align}
also holds for all $f\in \mathcal{S}$.
\end{lemma}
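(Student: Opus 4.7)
The plan is a duality argument, converting the embedding of Lemma~\ref{lem26} (and its dyadic summed form in Remark~\ref{rem1a}) into a Duhamel estimate in $X^{0,b}$. The starting point is the classical inhomogeneous $X^{s,b}$ bound
\begin{equation*}
\bigg\lVert\int_0^t e^{i(t-\tau)\Delta} F(\tau)\,d\tau\bigg\rVert_{{0,b}} \lesssim \lVert F\rVert_{{0,b-1}}, \qquad b > 1/2,
\end{equation*}
valid after a standard smooth time cutoff adapted to $I$ (the cutoff loss is harmless since $|I| < 1/2$). Because $P_I$ commutes with both $e^{it\Delta}$ and the Duhamel integration, (\ref{eq_aa1}) reduces to the dual inequality
$\lVert P_I f\rVert_{{0,b-1}} \lesssim |I|^{2b-1+\epsilon} \lVert f\rVert_{L_x^{4/3+\epsilon} L_t^{4/3}}$,
and (\ref{eq_aa2}) reduces to $\lVert f\rVert_{{0,b-1}} \lesssim \lVert(\sqrt{-\Delta})^{2b-1+\epsilon} f\rVert_{L_x^{4/3+\epsilon} L_t^{4/3}}$.

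For (\ref{eq_aa1}), I would dualize the $X^{0,b-1}$ norm: by Plancherel in $t$ and self-adjointness of $P_I$,
\begin{equation*}
\lVert P_I f\rVert_{{0,b-1}} = \sup_{\lVert g\rVert_{{0,1-b}}\leq 1} \bigl|\langle f, P_I g\rangle_{L^2_{t,x}}\bigr|,
\end{equation*}
and H\"older in $x$ and $t$ with exponents $(4/3+\epsilon, 4/3)$ and $(p, 4)$, where $p = (4/3+\epsilon)' \in (4/3, 4)$, bounds this pairing by $\lVert f\rVert_{L_x^{4/3+\epsilon} L_t^{4/3}} \lVert P_I g\rVert_{L_x^p L_t^4}$. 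Since $b > 1/2$ forces $1-b<1/2$, the second regime of Lemma~\ref{lem26} yields $\lVert P_I g\rVert_{L_x^p L_t^4} \lesssim |I|^{1-2(1-b)+\epsilon}\lVert g\rVert_{{0,1-b}} = |I|^{2b-1+\epsilon}\lVert g\rVert_{{0,1-b}}$, and taking the supremum closes the estimate.

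For (\ref{eq_aa2}), I would split the pairing by writing $\langle f, g\rangle = \langle (\sqrt{-\Delta})^s f, (\sqrt{-\Delta})^{-s} g\rangle$ with $s = 2b-1+\epsilon$, apply H\"older to the first factor in $L_x^{4/3+\epsilon} L_t^{4/3}$, and use Remark~\ref{rem1a} (the dyadic summed form of Lemma~\ref{lem26}) to bound
\begin{equation*}
\lVert (\sqrt{-\Delta})^{-s} g\rVert_{L_x^p L_t^4} \lesssim \lVert (\sqrt{-\Delta})^{-s} g\rVert_{{s, 1-b}} \sim \lVert g\rVert_{{0,1-b}},
\end{equation*}
where the last equivalence uses that the Dirichlet eigenvalues are bounded below by $z_1 > 0$, so that negative and positive powers of $\sqrt{-\Delta}$ cancel up to a harmless constant. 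The main technical input is Lemma~\ref{lem26} itself; the remainder is bookkeeping of exponents, and the only delicate point is matching the endpoint regimes $b > 1/2$ in the Duhamel step with $1-b < 1/2$ in the dual embedding.
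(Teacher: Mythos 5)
Your proposal is correct and follows essentially the same route as the paper: the paper obtains the reduction to $\lVert P_I f\rVert_{0,-(1-b)}$ by computing the Duhamel integral explicitly (your ``classical inhomogeneous $X^{s,b}$ bound'' is exactly that step), then dualizes against $\lVert g\rVert_{0,1-b}\leq 1$, applies H\"older, and closes with the second regime of Lemma~\ref{lem26} (or Remark~\ref{rem1a} for the global case), just as you do. One small slip: the remark about the cutoff loss being harmless should reference the time horizon $T<\tfrac{1}{2}$, not the frequency interval $I$.
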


\begin{proof}
We begin by showing ($\ref{eq_aa1}$), invoking the representation 
\begin{align*}
f(t,x)=\sum_{m,n} \widehat{f}(m,n)e_n(x)e(mt)
\end{align*}
and observing that the left-hand side of ($\ref{eq_aa1}$) is 
\begin{align}
\nonumber &\bigg\lVert \sum_{\substack{m\in \mathbb{Z}\\z_n\in I}}\int_0^t e^{i(t-\tau)z_n^2}\widehat{f}(m,n)e_n(x)e^{im\tau}d\tau\bigg\rVert_{{0,b}}\\
\nonumber &\hspace{1.2in}=\bigg\lVert \sum_{\substack{m\in \mathbb{Z}\\z_n\in I}}\widehat{f}(m,n)e_n(x) \cdot \frac{e(mt)-e(z_n^2t)}{i(m-z_n^2)}\bigg\rVert_{{0,b}}\\
\nonumber &\hspace{1.2in}\lesssim \bigg(\sum_{\substack{m\in\mathbb{Z}\\z_n\in I}} \frac{|\widehat{f}(m,n)|^2}{\langle m-z_n^2\rangle^{2(1-b)}}\bigg)^{1/2}+\bigg(\sum_{z_n\in I} \bigg|\sum_m \frac{\widehat{f}(m,n)}{m-z_n^2}\bigg|^2\bigg)^{1/2}\\
\nonumber &\hspace{1.2in}\lesssim \bigg(\sum_{\substack{m\in\mathbb{Z}\\z_n\in I}} \frac{|\widehat{f}(m,n)|^2}{\langle m-z_n^2\rangle^{2(1-b)}}\bigg)^{1/2}\\
&\hspace{1.2in}=\lVert P_If\rVert_{{0,-(1-b)}}\label{eq_aa2a}
\end{align}
where we have used Cauchy-Schwarz to obtain the second inequality.  

By duality arguments followed by H\"older's inequality combined with Lemma $\ref{lem26}$, we then have 
\begin{align}
\nonumber &\lVert P_If\rVert_{{0,-(1-b)}}\\
\nonumber &\hspace{0.2in}=\sup \bigg\{\bigg|\int P_If(t,x)P_Ig(t,x)dtdx\bigg|:g\in L^2, \lVert P_Ig\rVert_{{0,1-b}}\leq 1\bigg\}\\
\nonumber &\hspace{0.2in}\leq \lVert f\rVert_{L_x^{\frac{4}{3}+\epsilon}L_t^\frac{4}{3}}\lVert P_Ig\rVert_{L_x^{\frac{4+3\epsilon}{1+3\epsilon}}L_t^4}\\
\nonumber &\hspace{0.2in}\lesssim |I|^{1-2(1-b)+\epsilon}\lVert f\rVert_{L_x^{\frac{4}{3}+\epsilon}L_t^\frac{4}{3}}\lVert P_Ig\rVert_{{0,1-b}}\\
&\hspace{0.2in}\lesssim |I|^{2b-1+\epsilon}\lVert f\rVert_{L_x^{\frac{4}{3}+\epsilon}L_t^\frac{4}{3}}\label{eq_aa2b}
\end{align}
The inequality ($\ref{eq_aa1}$) now follows by combining ($\ref{eq_aa2a}$) and ($\ref{eq_aa2b}$).

The proof of ($\ref{eq_aa2}$) proceeds in a similar manner.  Arguing as above and using Remark $\ref{rem1a}$, we obtain
\begin{align}
\nonumber &\bigg\lVert \sum_{\substack{m\in \mathbb{Z}\\n\in \mathbb{N}}}\int_0^t e^{i(t-\tau)z_n^2}\widehat{f}(m,n)e_n(x)e^{im\tau}d\tau\bigg\rVert_{{-(2b-1+\epsilon),b}}\\
\nonumber &\hspace{1.5in}\lesssim\lVert f\rVert_{{-(2b-1+\epsilon),-(1-b)}}\\
\nonumber &\hspace{1.5in}\leq \sup \bigg\{\lVert f\rVert_{L_x^{\frac{4}{3}+\epsilon}L_t^\frac{4}{3}}\lVert g\rVert_{L_x^{\frac{4+3\epsilon}{1+3\epsilon}}L_t^4}:\lVert g\rVert_{{2b-1+\epsilon,1-b}}\leq 1\bigg\}\\
\nonumber &\hspace{1.5in}\lesssim \lVert f\rVert_{L_x^{\frac{4}{3}+\epsilon}L_t^\frac{4}{3}}\lVert g\rVert_{{2b-1+\epsilon,1-b}}\\
\nonumber &\hspace{1.5in}\lesssim \lVert f\rVert_{L_x^{\frac{4}{3}+\epsilon}L_t^\frac{4}{3}}
\end{align}
from which the desired inequality ($\ref{eq_aa2}$) follows immediately.
\end{proof}

\section{Probabilistic estimates}

In this section, we establish a collection of essential probabilistic estimates which will enable us to obtain long-time control over solutions to ($\ref{truncated}$).  We remark that these estimates are uniform in the truncation parameter $N$; this uniformity is important in the convergence proof of the next section.

We begin by establishing an almost sure bound on initial data belonging to the support of the Gibbs measure $\mu_G$.  
\begin{lemma}
\label{lem-prob-init-data}
Fix $s<\frac{1}{2}$.  Then we have the bound
\begin{align}
\mu_F(\{\phi:N_0^{\frac{1}{2}-s}\lVert P_{\geq N_0}\phi\rVert_{H_x^s}>\lambda\})&\lesssim \exp(-\lambda^c)\label{eq-aaaq-1}
\end{align}
for all $N_0\geq 1$ sufficiently large, where $\phi=\phi^{(\omega)}=\sum_{n\in\mathbb{N}} \frac{g_n(\omega)}{z_n}e_n$.
\end{lemma}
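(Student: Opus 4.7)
The plan is to reduce the statement to the Gaussian concentration inequality (\ref{f-8}) applied to the Gaussian process $X(\omega) = P_{\geq N_0}\phi^{(\omega)}$ taking values in $H_x^s(B_2)$. The key quantitative input will be a computation of $\mathbb{E}_\omega[\lVert X\rVert_{H_x^s}]$ that produces exactly the factor $N_0^{-(\frac{1}{2}-s)}$ appearing in the statement.

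First I would use the eigenfunction expansion, orthogonality in $H_x^s$, and the asymptotics $z_n = \pi(n-\frac{1}{4}) + O(1/n)$ from (\ref{eq_asymp}) to rewrite
\begin{align*}
\lVert P_{\geq N_0}\phi^{(\omega)}\rVert_{H_x^s}^2 \;\sim\; \sum_{z_n\geq N_0}\frac{|g_n(\omega)|^2}{z_n^{2(1-s)}}.
\end{align*}
Taking expectation and using $\mathbb{E}|g_n|^2 \lesssim 1$ together with the integral comparison $\sum_{z_n\geq N_0} z_n^{-2(1-s)} \lesssim N_0^{-(1-2s)}$ (valid since $s<\frac{1}{2}$, with $z_n\sim n$), one obtains
\begin{align*}
\bigl(\mathbb{E}_\omega\lVert P_{\geq N_0}\phi\rVert_{H_x^s}^2\bigr)^{1/2} \lesssim N_0^{-(\frac{1}{2}-s)}.
\end{align*}
By Jensen this controls $\mathbb{E}_\omega\lVert P_{\geq N_0}\phi\rVert_{H_x^s}$ as well.

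Next I would invoke the Gaussian tail bound (\ref{f-8}), which applies because $P_{\geq N_0}\phi^{(\omega)}$ is a Gaussian process in the Banach space $H_x^s(B_2)$ (its coordinates $g_n/z_n$ are independent normalized Gaussians scaled by a deterministic sequence). Setting the threshold $t\mathbb{E}_\omega\lVert X\rVert = \lambda N_0^{-(\frac{1}{2}-s)}$ and choosing $t \simeq \lambda$ (which is $>1$ for $\lambda$ large; the small-$\lambda$ regime is absorbed into the implicit constant) yields
\begin{align*}
\mu_F\bigl(\{N_0^{\frac{1}{2}-s}\lVert P_{\geq N_0}\phi\rVert_{H_x^s}>\lambda\}\bigr) \lesssim e^{-c\lambda^2},
\end{align*}
which is the desired estimate with exponent $c=2$.

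An equivalent route, avoiding direct appeal to (\ref{f-8}), is to compute $L^q(d\omega)$ moments of $\lVert P_{\geq N_0}\phi\rVert_{H_x^s}$ using Minkowski's inequality in the $\ell^2$ representation together with (\ref{prob-est}) to get $\lVert\,\lVert P_{\geq N_0}\phi\rVert_{H_x^s}\,\rVert_{L^q(d\omega)} \lesssim q^{1/2}N_0^{-(\frac{1}{2}-s)}$, then apply Chebyshev and optimize in $q \simeq \lambda^2$. There is no real obstacle here; the only point requiring minor care is the verification that $z_n \sim n$ makes the tail sum $\sum_{z_n\geq N_0}z_n^{-2(1-s)}$ converge to a $N_0$-power with exactly the exponent $-(1-2s)$, which fixes the normalization $N_0^{\frac{1}{2}-s}$ in the statement.
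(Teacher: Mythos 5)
Your proposal is correct and uses essentially the same ingredients as the paper: the key computation is the tail sum $\sum_{z_n\geq N_0} z_n^{-2(1-s)}\lesssim N_0^{-(1-2s)}$, and then the Gaussian nature of $\phi^{(\omega)}$ gives the exponential tail. The paper carries out the Chebyshev-plus-moment-optimization version explicitly (your ``equivalent route'' via \eqref{prob-est}, Minkowski, and optimizing in $q\simeq\lambda^2$), while your primary route packages the same argument through the Gaussian concentration inequality \eqref{f-8}; these are interchangeable.
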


\begin{proof}
Fix $q_1\geq 2$ to be determined.  The Tchebyshev and Minkowski inequalities together with the estimate ($\ref{prob-est}$) then imply that the left-hand side of $(\ref{eq-aaaq-1})$ is bounded by
\begin{align}
\nonumber \frac{N_0^{(\frac{1}{2}-s)q_1}}{\lambda^{q_1}}\lVert P_{\geq N_0}\phi\rVert_{L_\omega^{q_1}(d\mu_F;H_x^s)}^{q_1}&\lesssim \frac{N_0^{(\frac{1}{2}-s)q_1}}{\lambda^{q_1}}\bigg\lVert \sum_{n=N_0}^\infty \frac{g_n(\omega)}{z_n^{(1-s)}}e_n\bigg\rVert_{L_x^2(L_\omega^{q_1}(d\mu_F))}^{q_1}\\
\nonumber &\lesssim \bigg(\frac{N_0^{\frac{1}{2}-s}\sqrt{q_1}}{\lambda}\bigg)^{q_1}\bigg(\sum_{n=N_0}^\infty \frac{\lVert e_n(x)\rVert_{L_x^2}^2}{z_n^{2(1-s)}}\bigg)^{q_1/2}\\
\nonumber &\lesssim \bigg(\frac{N_0^{\frac{1}{2}-s}\sqrt{q_1}}{\lambda}\bigg)^{q_1}\bigg( \sum_{n=N_0}^\infty z_n^{-2(1-s)} \bigg)^{q_1/2}\\
&\lesssim \bigg(\frac{\sqrt{q_1}}{\lambda}\bigg)^{q_1},\label{aaaa1}
\end{align}
where the summation in the third line is bounded by $N_0^{-1+2s}$ for $N_0$ sufficiently large, as a consequence of the asymptotic representation ($\ref{eq_asymp}$) for the sequence of eigenvalues $(z_n)$.
Optimizing ($\ref{aaaa1}$) in $q_1$, we obtain the desired estimate ($\ref{eq-aaaq-1}$).
\end{proof}

In particular, we note that Lemma $\ref{lem-prob-init-data}$ includes a description of the decay present when considering the restriction of $\phi$ to high frequencies.  The next proposition is an essential ingredient and combines this type of probabilistic estimate with the invariance of the Gibbs measure $\mu_G$ under the finite-dimensional evolution to obtain certain spacetime bounds of large deviation type.

\begin{proposition}
\label{prop-prob}
Let $T>0$ be given.  Then for every $0\leq \sigma<\frac{1}{2}$, $2\leq p<\frac{2}{\sigma}$ and $q<\infty$, 
\begin{align*}
\mathbb{E}_{\mu_F}\bigg[\lVert (\sqrt{-\Delta})^{\sigma}u_\phi\rVert_{L_x^pL_t^q}\bigg]<C(\sigma,p,q,T)
\end{align*}
where $u_N=u_N^{(\phi)}$ is the solution of ($\ref{truncated}$) corresponding to initial data $P_N\phi$ and the $L_t^q$ norm is taken on the interval $[0,T)$.
In fact, there is the stronger distributional inequality 
\begin{align*}
\mu_F(\{\phi:\lVert (\sqrt{-\Delta})^{\sigma}u_{N}\rVert_{L_x^pL_t^q}>\lambda\})\lesssim \exp(-\lambda^c),\quad \lambda>0,\,N\geq 1,
\end{align*}
for some $c>0$. 
\end{proposition}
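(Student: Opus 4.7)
My plan is to prove the distributional tail inequality first with respect to the truncated Gibbs measure $\mu_G^{(N)}$ (using its invariance under the flow of (\ref{truncated})), and then to transfer the estimate to $\mu_F$ via a level-set splitting of the density. The integrated expectation bound is then immediate from the tail inequality by a layer-cake argument, so I will focus on the distributional estimate.

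For the $\mu_G^{(N)}$-estimate, I would take $Q\geq\max(p,q)$ and apply Minkowski's integral inequality twice to interchange $L^Q_\phi(\mu_G^{(N)})$ with the spatial and temporal norms, yielding
\[
\bigl\lVert (\sqrt{-\Delta})^\sigma u_N\bigr\rVert_{L^Q_\phi(\mu_G^{(N)}) L_x^p L_t^q} \le \bigl\lVert\,\lVert (\sqrt{-\Delta})^\sigma u_N(t,x,\phi)\rVert_{L^Q_\phi(\mu_G^{(N)})}\bigr\rVert_{L_x^p L_t^q}.
\]
Invariance makes the inner $L^Q_\phi$-moment $t$-independent and equal to that of $(\sqrt{-\Delta})^\sigma P_N\phi(x)$, so the temporal factor is just $T^{1/q}$. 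Writing $Z_N=\int e^{-V_N}d\mu_F^{(N)}$ with $V_N(\phi)=\frac{1}{\alpha+2}\lVert P_N\phi\rVert_{L_x^{\alpha+2}}^{\alpha+2}$, the uniform comparison $d\mu_G^{(N)}\leq Z_N^{-1}d\mu_F^{(N)}$ (defocusing case) combined with (\ref{prob-est}) bounds this pointwise moment by $\lesssim\sqrt{Q}\bigl(\sum_{z_n\leq N}|e_n(x)|^2/z_n^{2(1-\sigma)}\bigr)^{1/2}$. Minkowski in $\ell^2$ and (\ref{efn-bds}) show that $\sum_n\lVert e_n\rVert_{L_x^p}^2/z_n^{2(1-\sigma)}$ is finite precisely under the hypothesis $\sigma<1/2$ and $p<2/\sigma$. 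Assembling this gives $\mathbb{E}_{\mu_G^{(N)}}[\lVert(\sqrt{-\Delta})^\sigma u_N\rVert_{L_x^p L_t^q}^Q]^{1/Q}\lesssim T^{1/q}\sqrt{Q}$, uniformly in $N$; Chebyshev with $Q\sim\lambda^2$ then yields $\mu_G^{(N)}(A_\lambda)\lesssim\exp(-c\lambda^2)$, where $A_\lambda=\{\lVert(\sqrt{-\Delta})^\sigma u_N\rVert_{L_x^p L_t^q}>\lambda\}$.

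To pass from $\mu_G^{(N)}$ to $\mu_F$, I would split $A_\lambda$ on the event $\{V_N\leq K\}$. On this event the pointwise bound $d\mu_F^{(N)}\leq Z_N e^K d\mu_G^{(N)}$ contributes $\lesssim e^K\exp(-c\lambda^2)$; on its complement, (\ref{f-8}) applied to the Gaussian random variable $\phi\mapsto\lVert P_N\phi\rVert_{L_x^{\alpha+2}}$ (whose $\mu_F$-expectation is $O(1)$ uniformly in $N$ by (\ref{efn-bds})) gives $\mu_F(V_N>K)\lesssim\exp(-cK^{2/(\alpha+2)})$. Balancing at $K\sim\lambda^2$ delivers $\mu_F(A_\lambda)\lesssim\exp(-c\lambda^{4/(\alpha+2)})$, which is the claim. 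The principal difficulty is precisely this transfer step, because a naive Radon-Nikodym comparison fails: $e^{V_N}\notin L^r(\mu_G^{(N)})$ for any $r>1$, so one cannot dualize the $\mu_G$-tail into a $\mu_F$-tail directly. The level-set splitting circumvents this, at the cost of degrading the natural Gaussian exponent $2$ to the $\alpha$-dependent $4/(\alpha+2)$. Uniformity in $N$ is preserved throughout thanks to the explicit dependence in (\ref{prob-est}) and (\ref{efn-bds}) and the elementary bound $Z_N\geq e^{-\mathbb{E}_{\mu_F}V_N}\gtrsim 1$ by Jensen.
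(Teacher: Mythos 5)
Your proposal reproduces the paper's argument in all essentials: the $\mu_G^{(N)}$-tail via invariance, Minkowski interchange, Gaussian $Q$-moment bounds and the eigenfunction estimate (\ref{efn-bds}) (giving $\exp(-c\lambda^2)$), followed by the split of the $\mu_F$-event on a level set of the potential $V_N$. The paper phrases this split as $\mu_F(A_\lambda) \lesssim e^{\lambda_1^{\alpha+2}/(\alpha+2)}\mu_G(A_\lambda) + \mu_F(\{\|\phi\|_{L^{2+\alpha}}>\lambda_1\})$ and optimizes in $\lambda_1$, which is the same as your $K$-splitting (with $K=\lambda_1^{\alpha+2}/(\alpha+2)$, and with the unnormalized $\mu_G$, so the $Z_N$ bookkeeping disappears), leading to the same final exponent $\lambda^{4/(\alpha+2)}$.
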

\begin{proof}
Fix $\lambda_1>0$ to be determined later in the argument.  Then, denoting $u=u_N^{(\phi)}$, we have
\begin{align}
\nonumber &\mu_F(\{\phi:\lVert (\sqrt{-\Delta})^{\sigma}u\rVert_{L_x^pL_t^q}>\lambda\})\\
\nonumber &\hspace{0.4in}\lesssim e^{\frac{1}{2+\alpha}\lambda_1^{2+\alpha}}\mu_G(\{\phi:\lVert (\sqrt{-\Delta})^{\sigma}u\rVert_{L_x^pL_t^q}>\lambda\})\\
\nonumber &\hspace{0.7in}+\mu_F(\{\phi:\lVert (\sqrt{-\Delta})^{\sigma}u\rVert_{L_x^pL_t^q}>\lambda\}\cap \{\phi:\lVert \phi\rVert_{L_{x}^{2+\alpha}}>\lambda_1\})\\
&\hspace{0.4in}=:\textrm{(I)}+\textrm{(II)}\label{comb--1}
\end{align}

To estimate term (I), we observe that for every $q_1\geq\max\{p,q\}$ the Tchebyshev inequality and Minkowski inequality for integrals allow us to bound
\begin{align}
\nonumber \mu_G(\{\phi:\lVert (\sqrt{-\Delta})^{\sigma}u\rVert_{L_x^pL_t^q}>\lambda\})
\end{align}
by
\begin{align}
\nonumber \frac{1}{\lambda^{q_1}}\int \lVert (\sqrt{-\Delta})^{\sigma}u\rVert_{L_x^pL_t^q}^{q_1}d\mu_G(\phi)
&\lesssim \big(\frac{T^{1/q}}{\lambda}\big)^{q_1}\big\lVert \,\big\lVert (\sqrt{-\Delta})^{\sigma}\phi\big\rVert_{L_x^p}\big\rVert_{L^{q_1}(d\mu_G)}^{q_1}\\
&\lesssim_T \big(\frac{\sqrt{q_1}}{\lambda}\big)^{q_1}\bigg\lVert \bigg(\sum_{z_n\leq N}\frac{|e_n(x)|^2}{z_n^{2(1-\sigma)}}\bigg)^{1/2}\bigg\rVert_{L_x^p}^{q_1}\label{eq-aaa-bbb-1}
\end{align}
where we use the invariance of the Gibbs measure $\mu_G$ under the truncated evolution ($\ref{truncated}$).  Using Minkowski's inequality (since $p\geq 2$), we then have
\begin{align}
(\ref{eq-aaa-bbb-1})&\lesssim \big(\frac{\sqrt{q_1}}{\lambda}\big)^{q_1}\bigg(\sum_{z_n\leq N}\frac{\lVert e_n(x)\rVert_{L_x^p}^2}{z_n^{2(1-\sigma)}}\bigg)^{q_1/2}\label{eq-aaa-bbb-2}
\end{align}
Now, recalling the eigenfunction bounds ($\ref{efn-bds}$), we have the bound
\begin{align}
\sum_{n\in \mathbb{N}} \frac{n^{1-\frac{4}{p}}}{z_n^{2(1-\sigma)}}&\leq C_0+C_1\sum_{n\geq N_0} n^{-(1-2\sigma)-\frac{4}{p}}<\infty,\label{prior-n0}
\end{align}
where we have used the hypothesis $p<\frac{2}{\sigma}$.  

Combining ($\ref{prior-n0}$) with ($\ref{eq-aaa-bbb-2}$) gives
\begin{align}
\mu_G(\{\phi:\lVert (\sqrt{-\Delta})^{\sigma}u\rVert_{L_x^pL_t^q}>\lambda\})&\lesssim \big(\frac{\sqrt{q_1}}{\lambda}\big)^{q_1}\label{bound123123}
\end{align}
where the implicit constant is independent of $N$.  
Optimizing ($\ref{bound123123}$) in $q_1$ then implies the bound
\begin{align}
\textrm{(I)}\lesssim \exp(\frac{1}{2+\alpha}\lambda_1^{2+\alpha})\exp(-\lambda^{2}/(2e)).\label{comb-0}
\end{align}

To estimate term (II), we fix $q_2>2+\alpha$ and again apply the Tchebyshev and Minkowski inequalities to bound the term
\begin{align*}
\mu_F(\{\phi:\lVert \phi\rVert_{L_x^{2+\alpha}}>\lambda_1\})
\end{align*}
by
\begin{align}
\nonumber \frac{1}{\lambda_1^{q_2}}\int \lVert \phi\rVert_{L_{x}^{2+\alpha}}^{q_2}d\mu_F(\phi)
\nonumber &\lesssim \frac{1}{\lambda_1^{q_2}}\big\lVert\,\lVert \phi\rVert_{L^{q_2}(d\mu_F)}\,\big\rVert_{L_{x}^{2+\alpha}}^{q_2}\\
\nonumber &\lesssim \big(\frac{\sqrt{q_2}}{\lambda_1}\big)^{q_2}\bigg\lVert \bigg(\sum_{n} \frac{|e_n(x)|^2}{z_n^2}\bigg)^{1/2}\bigg\rVert_{L_x^{2+\alpha}}^{q_2}\\
\nonumber &\lesssim \big(\frac{\sqrt{q_2}}{\lambda_1}\big)^{q_2}\bigg(\sum_{n} \frac{\lVert e_n(x)\rVert_{L_x^{2+\alpha}}^2}{z_n^2}\bigg)^{q_2/2}.
\end{align}

Appealing to the eigenfunction bounds ($\ref{efn-bds}$) and the asymptotic representation ($\ref{eq_asymp}$) for $z_n$, we estimate
\begin{align*}
\sum_{n\in \mathbb{N}} \frac{n^{1-\frac{4}{2+\alpha}}}{z_n^2}&\leq C_0+\sum_{n\geq N_0} n^{-1-\frac{4}{2+\alpha}}<\infty
\end{align*}
for $N_0$ sufficiently large.  As a consequence we obtain
\begin{align*}
\mu_F(\{\phi:\lVert \phi\rVert_{L_x^{2+\alpha}}>\lambda_1\})&\lesssim \big(\frac{\sqrt{q_2}}{\lambda_1}\big)^{q_2}.
\end{align*}

Minimizing the right hand side over admissible values of $q_2$ we get the bound
\begin{align}
\textrm{(II)}&\lesssim \exp(-\lambda_1^2/(2e)).\label{comb-1}
\end{align}

Combining ($\ref{comb--1}$) with ($\ref{comb-0}$) and ($\ref{comb-1}$) and optimizing in $\lambda_1$ then gives
\begin{align*}
\mu_F(\{\phi:\lVert (\sqrt{-\Delta})^{\sigma}u\rVert_{L_x^pL_t^q}>\lambda\})&\lesssim \exp(-c\lambda^{\frac{4}{2+\alpha}})
\end{align*}
as desired.  This completes the proof of Proposition $\ref{prop-prob}$.
\end{proof}

We will also require a slight refinement of Proposition $\ref{prop-prob}$ which is a consequence of similar arguments, and allows for more precise estimates.
\begin{proposition}
\label{prop-prob-2}
Let $T$, $\sigma$, $p$, $q$ and $(u_N)$  be as in Proposition $\ref{prop-prob}$.  Then for all $M,N\geq 1$ with $M<N$, one has the distributional inequality 
\begin{align*}
\mu_F^{(N)}(\{\phi_N:\lVert (\sqrt{-\Delta})^\sigma (u_N-P_Mu_N)\rVert_{L_x^pL_t^q}>\lambda\})<e^{-(\theta\lambda)^c}
\end{align*}
where we have set $\theta=T^{-\frac{1}{q}}M^{\frac{2}{p}-\sigma}$.
\end{proposition}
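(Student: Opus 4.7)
The plan is to follow the two-step decomposition used in the proof of Proposition \ref{prop-prob}, modifying only the eigenfunction tail sum at the end so as to extract the $M$-dependent gain $M^{2(\sigma-2/p)}$ that produces the factor $\theta$ in the conclusion.

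Precisely, for a parameter $\lambda_1>0$ to be chosen, I would split
\begin{align*}
\mu_F^{(N)}(\lVert(\sqrt{-\Delta})^\sigma(u_N-P_Mu_N)\rVert_{L_x^pL_t^q}>\lambda)
\end{align*}
as in ($\ref{comb--1}$) into a Gibbs part $(\mathrm{I})$ (carrying the weight $e^{\lambda_1^{2+\alpha}/(2+\alpha)}$) and a large-norm remainder $(\mathrm{II})\leq \mu_F(\lVert\phi\rVert_{L_x^{2+\alpha}}>\lambda_1)$. The term $(\mathrm{II})$ is controlled exactly as in ($\ref{comb-1}$), giving $\lesssim e^{-\lambda_1^2/(2e)}$. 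For $(\mathrm{I})$, Tchebyshev and Minkowski for integrals at exponent $q_1\geq\max\{p,q\}$ are applied precisely as in ($\ref{eq-aaa-bbb-1}$). The crucial observation is that at each fixed $t\in[0,T)$, invariance of $\mu_G$ under the finite-dimensional flow ($\ref{truncated}$) implies that $(u_N-P_Mu_N)(t)$ has the same $\mu_G$-distribution as $(P_N-P_M)\phi$, so the inner $L^{q_1}(d\mu_G)$ norm is independent of $t$ and the $L_t^q$ integration contributes only a factor $T^{1/q}$.

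Bounding $d\mu_G\lesssim d\mu_F$ (defocusing), the Khintchine-type estimate ($\ref{prob-est}$) applied pointwise in $x$ gives
\begin{align*}
\bigg\lVert \sum_{M<z_n\leq N}\frac{g_n(\omega)}{z_n^{1-\sigma}}e_n(x)\bigg\rVert_{L^{q_1}(d\mu_F)}\lesssim \sqrt{q_1}\bigg(\sum_{M<z_n\leq N}\frac{|e_n(x)|^2}{z_n^{2(1-\sigma)}}\bigg)^{1/2}.
\end{align*}
Applying Minkowski in $L_x^p$ (valid since $p\geq 2$) and invoking the eigenfunction bounds ($\ref{efn-bds}$) together with the asymptotic $z_n\sim n$ from ($\ref{eq_asymp}$), the high-frequency tail of the eigenfunction sum now yields
\begin{align*}
\sum_{n\geq M}\frac{\lVert e_n\rVert_{L_x^p}^2}{z_n^{2(1-\sigma)}}\lesssim \sum_{n\geq M}n^{-1-4/p+2\sigma}\lesssim M^{2(\sigma-2/p)},
\end{align*}
convergence being ensured by $p<2/\sigma$. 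Chaining these bounds together gives
\begin{align*}
\int\lVert(\sqrt{-\Delta})^\sigma(u_N-P_Mu_N)\rVert_{L_x^pL_t^q}^{q_1}\,d\mu_G\lesssim (T^{1/q}\sqrt{q_1}M^{\sigma-2/p})^{q_1}=(\sqrt{q_1}/\theta)^{q_1},
\end{align*}
and optimizing in $q_1$ produces $(\mathrm{I})\lesssim e^{\lambda_1^{2+\alpha}/(2+\alpha)}e^{-c(\theta\lambda)^2}$, in analogy with ($\ref{bound123123}$)--($\ref{comb-0}$). Optimizing over $\lambda_1$ exactly as at the end of the proof of Proposition \ref{prop-prob} then yields the claimed bound $e^{-(\theta\lambda)^c}$.

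The only genuinely new ingredient compared to Proposition \ref{prop-prob} is the refined tail estimate on the eigenfunction sum, which is where the restriction of $u_N-P_Mu_N$ to spatial frequencies above $M$ is exploited. This step is only accessible because Gibbs invariance reduces the $q_1$-th moment computation at each time $t$ to that of the fixed Gaussian process supported on the annulus $M<z_n\leq N$; without this invariance, the nonlinear dynamics could in principle transfer energy between frequency shells and undermine the high-frequency gain.
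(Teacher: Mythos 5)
Your argument is correct and matches the intended proof: the paper omits the argument, remarking only that Proposition~\ref{prop-prob-2} ``is a consequence of similar arguments'' to Proposition~\ref{prop-prob}, and you have reconstructed exactly those arguments, with the one genuinely new step being the observation that the $\mu_G$-invariance together with the frequency restriction $z_n>M$ localizes the eigenfunction tail sum to $\sum_{n\gtrsim M} n^{-1-4/p+2\sigma}\lesssim M^{2(\sigma-2/p)}$, which is precisely what produces the factor $\theta=T^{-1/q}M^{2/p-\sigma}$ in the final large-deviation bound. The bookkeeping in the optimization over $q_1$ and then over $\lambda_1$ is carried out correctly and yields $\exp(-(\theta\lambda)^c)$ with $c=\tfrac{4}{2+\alpha}$, consistent with the exponent obtained in the proof of Proposition~\ref{prop-prob}.
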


\section{Bilinear estimate on the nonlinearity}

We now establish a bilinear estimate on the nonlinear term in the Duhamel formula ($\ref{duhamel-truncated}$) which controls interactions between high and low frequency components of the nonlinearity.

\begin{proposition}
\label{propP0}
Fix $0\leq s\leq 1$, $b>\frac{1}{2}$ and $N\in \mathbb{Z}_+$.  Then for every $\mu>0$ we have the inequality
\begin{align}
\bigg\lVert \int_0^t e^{i(t-\tau)\Delta}(fg)(\tau)d\tau\bigg\rVert_{{s,b}}&\lesssim \lVert f\rVert_{{s,b}}\lVert (\sqrt{-\Delta})^{5(2b-1)+\mu}g\rVert_{L_x^2L_t^2}.\label{eq-AA3}
\end{align}
for every $f$ and $g$ representable as
\begin{align*}
f(t,x)&=\sum_{n>N,m\in\mathbb{Z}} \widehat{f}(n,m)e_n(x)e(mt),\\
g(t,x)&=\sum_{n\leq N,m\in\mathbb{Z}} \widehat{g}(n,m)e_n(x)e(mt).
\end{align*}
\end{proposition}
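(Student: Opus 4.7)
The plan is to combine the dual Strichartz bound of Lemma \ref{lem35} with a Littlewood--Paley decomposition that exploits the frequency separation hypothesis $\supp \hat f\subset\{n>N\}$, $\supp \hat g\subset\{n\le N\}$.

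First I would extend Lemma \ref{lem35} from $X^{0,b}$ to $X^{s,b}$ by commuting $(\sqrt{-\Delta})^s$ through the Duhamel integral, reducing the question to the product-type bound
\begin{align*}
\bigg\lVert\int_0^t e^{i(t-\tau)\Delta}(fg)(\tau)d\tau\bigg\rVert_{s,b}\lesssim \lVert(\sqrt{-\Delta})^{s+2b-1+\epsilon}(fg)\rVert_{L_x^{4/3+\epsilon}L_t^{4/3}}.
\end{align*}
Next I would dyadically decompose $f=\sum_{N_1>N}P_{N_1}f$ and $g=\sum_{N_2\le N}P_{N_2}g$ and analyze each interaction $P_{N_1}f\cdot P_{N_2}g$ in the eigenfunction basis. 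Since $N_1>N\ge N_2$, the correlation coefficients $\int e_{n_1}e_{n_2}e_k\,dx$ that govern the expansion of $e_{n_1}e_{n_2}$ in the basis $(e_k)$ are effectively supported on $k\lesssim N_1$, so $(\sqrt{-\Delta})^{s+2b-1+\epsilon}$ acts on the product $P_{N_1}f\cdot P_{N_2}g$ essentially as multiplication by $N_1^{s+2b-1+\epsilon}$. Summation over these coefficients is controlled by the lattice-point counting estimate in Lemma \ref{lem-nt-2}, which plays the role of the convolution identity in the flat setting.

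I would then apply H\"older's inequality in the splitting $L_x^{4/3+\epsilon}L_t^{4/3}\supset L_x^4L_t^4\cdot L_x^{2+\delta}L_t^2$, placing $P_{N_1}f$ in the first factor and $P_{N_2}g$ in the second. The $f$-factor is estimated by Remark \ref{rem1a},
\begin{align*}
\lVert P_{N_1}f\rVert_{L_x^4L_t^4}\lesssim \lVert P_{N_1}f\rVert_{\epsilon,b}\lesssim N_1^{-s+\epsilon}\lVert P_{N_1}f\rVert_{s,b},
\end{align*}
while the $g$-factor is treated by interpolating the $L^p$-eigenfunction bounds \eqref{efn-bds} against Plancherel, giving $\lVert P_{N_2}g\rVert_{L_x^{2+\delta}}\lesssim N_2^{c\delta}\lVert P_{N_2}g\rVert_{L_x^2}$.

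Collecting the factors yields a dyadic expression whose $N_1$ summation carries an excess factor $N_1^{2b-1+O(\epsilon)}$ (not summable as it stands); using $N_2\le N<N_1$, this excess can be reexpressed as a geometric factor $(N_2/N_1)^{\beta}$ times a small positive power of $N_2$, and the resulting $N_2$-power is absorbed into the target norm $\lVert(\sqrt{-\Delta})^{5(2b-1)+\mu}g\rVert_{L_x^2L_t^2}$. The numerical coefficient $5$ arises from the accumulation of several $L^p\to L^2$ Bernstein losses and $L_x^pL_t^4\to X^{\epsilon,b}$ conversions, each one costing a fixed multiple of $2b-1$; an honest accounting of these losses is what forces the exponent $5(2b-1)+\mu$ (with $\mu>0$ absorbing the finitely many $\epsilon$-losses) and allows the dyadic sum in $N_1,N_2$ to close.

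The main obstacle is the paraproduct analysis in the eigenfunction basis: unlike in $\mathbb{R}^d$, where the convolution structure instantly localizes the frequency support of a product, here one must rely on bounds for the triple integrals $\int e_{n_1}e_{n_2}e_k\,dx$ and on the arithmetic lemma \ref{lem-nt-2} to justify that $P_{N_1}f\cdot P_{N_2}g$ is effectively supported at scale $N_1$. Once this localization is in place, the remainder of the argument is a fairly routine combination of H\"older, Remark \ref{rem1a}, and eigenfunction $L^p$ estimates.
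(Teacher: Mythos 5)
Your overall framework (dyadic decomposition, dual Strichartz, $f$ into $L^{p}_xL^4_t$ and $g$ into $L^2_{t,x}$) is in the spirit of the paper, but there is a genuine gap in the step that is supposed to make the dyadic sum converge, and the mechanism that the paper actually uses to close the argument is missing from your outline.

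The concrete problem is your claim that the non-summable excess $N_1^{2b-1+O(\epsilon)}$ in the $N_1$-sum ``can be reexpressed as a geometric factor $(N_2/N_1)^\beta$ times a small positive power of $N_2$.'' This is false as stated: for $N_2$ fixed and $N_1\to\infty$ the left side grows, while any expression of the form $(N_2/N_1)^\beta N_2^\gamma$ with $\beta>0$ decays; no algebraic rewriting turns a growing power of $N_1$ into a decaying one. More fundamentally, the way you apply Lemma~\ref{lem35} (through the global bound \eqref{eq_aa2} with $(\sqrt{-\Delta})^{2b-1+\epsilon}$) necessarily pays a factor of the \emph{output} frequency, which is $\sim N_1$, and this cannot be avoided without exploiting the localized version \eqref{eq_aa1}. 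Your remark that the triple products $\int e_{n_1}e_{n_2}e_k$ are ``effectively supported on $k\lesssim N_1$'' is also too weak to help: one needs the much sharper localization $|k-n_1|\lesssim N_2$ (the product of a high-frequency mode with a low-frequency mode lives in a window of width comparable to the \emph{low} frequency), and this refined localization, not Lemma~\ref{lem-nt-2} (which is a lattice-point count used for the $L^4_t$ Strichartz estimates, not for eigenfunction correlations), is what produces the gain.

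The paper's argument exploits exactly this: after decomposing $g$ into dyadic blocks $g_K$, it partitions $\mathbb{Z}$ into intervals $I$ of length $K_1=K^5$ and splits $fg_K$ into near interactions, where $P_{I'}(P_Ifg_K)$ is essentially diagonal in $I$ so that the localized estimate \eqref{eq_aa1} is applied on each block and costs only $K_1^{2b-1+\epsilon}=K^{5(2b-1)+O(\epsilon)}$ (a power of the $g$-frequency, not of $N_1$), summed in $\ell^2$ over $I$ by almost orthogonality, and far interactions, whose smallness is proven by an integration-by-parts/commutator estimate $|\langle e_n g_K, e_{n'}\rangle|\lesssim (n+n')^{-1}|n-n'|^{-1}(\cdots)$ and a Schur test, yielding a $K^{-1}$-gain. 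Without some version of this localized Duhamel estimate and the commutator-based tail bound, your $N_1$-sum cannot close, and the exponent $5(2b-1)+\mu$ in the conclusion (which comes precisely from the choice $K_1=K^5$) has no source in your computation.
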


\begin{proof}
It suffices to establish (\ref{eq-AA3}) for $s=0$.  Since 
\begin{align*}
\lVert DF\rVert_{0,b}\sim \lVert (-\Delta)^{1/2}F\rVert_{0,b}=\lVert F\rVert_{1,b}
\end{align*}
the statement then clearly follows for $s=1$ and we can then interpolate.

Let $\mu>0$ be given.  We begin by writing 
\begin{align*}
g=\sum_{K<N} g_K\quad\textrm{with}\quad g_K(t,x)=\sum_{\substack{n\sim K\\m\in\mathbb{Z}}} \widehat{g}(m,n)e_n(x)e(mt).
\end{align*}

Fix $K\geq 1$ a dyadic integer, and estimate
\begin{align}
\bigg\lVert \int_0^t e^{i(t-\tau)\Delta}fg_K(\tau)d\tau\bigg\rVert_{{0,b}}.\label{eq313}
\end{align}

Set $K_1=K^5$ and let $\mathcal{J}$ denote a partition of $\mathbb{Z}$ in intervals $I$ of size $K_1$.  Write
\begin{align*}
fg_K&=\sum_{\substack{I,I'\in \mathcal{J}\\\dist(I,I')\leq K_1}} P_{I'}(P_Ifg_K)+\sum_{\substack{I,I'\in \mathcal{J}\\\dist(I,I')>K_1}}P_{I'}(P_Ifg_K)\\
&=:(I)+(II).
\end{align*}

By construction, the contribution of (I) in ($\ref{eq313}$) is clearly bounded by
\begin{align}
\left[\sum_{I\in\mathcal{J}} \left\lVert P_I\left(\int_0^t e^{i(t-\tau)\Delta}(P_{\widetilde{I}}fg_K)(\tau)d\tau\right)\right\rVert_{0,b}^2\right]^{1/2}\label{eq-hw-36}
\end{align}
with $\widetilde{I}=I$ or $\widetilde{I}\in \mathcal{J}$ a neighbor of $I$.  Applying Lemma $\ref{lem35}$ to each term of ($\ref{eq-hw-36}$) gives the estimate
\begin{align*}
K_1^{2b-1+\epsilon}\left(\sum_{I\in\mathcal{J}}\lVert P_{\widetilde{I}}fg_K\rVert_{L_x^{\frac{4}{3}+\epsilon}L_t^\frac{4}{3}}^2\right)^{1/2}\leq K_1^{2b-1+\epsilon}\lVert g_K\rVert_{L_x^{2+\epsilon'}L_t^2}\left(\sum_{I\in\mathcal{J}}\lVert P_If\rVert_{L_x^{4-\epsilon}L_t^4}^2\right)^{1/2}
\end{align*}
and since by Lemma $\ref{lem26}$
\begin{align*}
\lVert P_If\rVert_{L_x^{4-\epsilon}L_t^4}\leq K_1^\epsilon \lVert P_If\rVert_{0,b}
\end{align*}
we obtain
\begin{align}
K_1^{2b-1+2\epsilon+\epsilon'}\lVert f\rVert_{0,b}\lVert g_K\rVert_{L_{x,t}^2}\leq K^{5(2b-1)+\mu}\lVert f\rVert_{0,b}\lVert g_K\rVert_{L_{x,t}^2}.\label{eq-hw-37}
\end{align}

Next we estimate the contribution of (II) in ($\ref{eq313}$), which will appear as an error term.  This contribution may certainly be bounded by $\lVert (II)\rVert_{L_{x,t}^2}$.  Fix $t$ and write $f(x,t)=\sum_{n\geq 1} f_ne_n(x)$.  Clearly, we obtain by duality
\begin{align}
\nonumber \left\lVert \sum_{\substack{I,I'\in\mathcal{J}\\\dist(I,I')>K_1}} P_{I'}(P_Ifg_K)\right\rVert_{L_x^2}&\leq \max_{\sum_{n\geq 1} |a_n|^2\leq 1} \left(\sum_{|n-n'|>K_1} |f_n|\, |a_{n'}|\, |\langle e_ng_K,e_{n'}\rangle|\right)^{1/2}\\
&=\sum_{n,n'\geq 1} |f_n|\, |a_{n'}|\, M_{n,n'}\label{eq-hw-38}
\end{align}
where we have denoted
\begin{align}
M_{n,n'}=|\langle e_ng_K,e_{n'}\rangle|\chi_{|n-n'|>K_1}.\label{eq-hw-39}
\end{align}

We therefore need to estimate the norm $\lVert M\rVert_{\ell^2(\mathbb{Z}_+)\rightarrow\ell^2(\mathbb{Z}_+)}$ of the matrix $M$.  We will rely on the Shur bound 
\begin{align*}
\lVert M\rVert\leq \sup_n\left(\sum_{n'}|M_{n,n'}|\right)
\end{align*}
(since $M$ is symmetric).

It remains to bound $\langle e_ng_K,e_{n'}\rangle$.  Write
\begin{align*}
|z_n^2-z_{n'}^2|\, |\langle e_ng_K,e_{n'}\rangle|=|\langle \Delta e_ng_K,e_{n'}\rangle-\langle e_ng_K,\Delta e_{n'}\rangle|
\end{align*}
and integrate by parts, using the Dirichlet boundary condition on $B_2$ to obtain the bound
\begin{align*}
|\langle \nabla e_n\cdot \nabla g_K,e_{n'}\rangle|+|\langle e_n\Delta g_K,e_{n'}\rangle|.
\end{align*}

Hence, for $|n-n'|\gg 1$, we have
\begin{align}
|\langle e_ng_K,e_{n'}\rangle|\lesssim (n+n')^{-1}|n-n'|^{-1}(|\langle \nabla e_n\cdot \nabla g_K,e_{n'}\rangle|+|\langle e_n\Delta g_K,e_{n'}\rangle|).\label{eq-hw-40}
\end{align}

Therefore, fixing $n\in\mathbb{Z}_+$,
\begin{align*}
\sum_{n'} M_{n,n'}&\lesssim \sum_{\{s:2^s>K_1\}} 2^{-s}\sum_{\{n:|n-n'|\sim 2^s\}} (n+n')^{-1}(|\langle \nabla e_n\cdot \nabla g_K,e_{n'}\rangle|\\
&\hspace{2.6in}+|\langle e_n\Delta g_K,e_{n'}\rangle|)\\
&\lesssim \sum_{\{s:2^s>K_1\}} \frac{2^{-s/2}}{n}\left[\sum_{n'\geq 1} (|\langle \nabla e_n\cdot \nabla g_K,e_{n'}\rangle|^2+|\langle e_n\Delta g_K,e_{n'}\rangle|^2)\right]^{1/2}\\
&\lesssim \frac{K_1^{-1/2}}{n}(\lVert \nabla e_n\cdot \nabla g_K\rVert_{L_x^2}+\lVert e_n\Delta g_K\rVert_{L_x^2})
\end{align*}
where we used Cauchy-Schwarz and Parseval.  The above quantity is then bounded by a multiple of 
\begin{align*}
\frac{K_1^{-1/2}}{n}(\lVert \nabla e_n\rVert_{L_x^2}\lVert \nabla g_K\rVert_{L_x^\infty}+\lVert \Delta g_K\rVert_{L_x^2})&\lesssim \frac{K^2}{K_1^{1/2}}\lVert g(t)\rVert_{L_x^2}<K^{-1}\lVert g(t)\rVert_{L_x^2}
\end{align*}
where we have used the choice of $K_1$.

This proves that
\begin{align*}
(\ref{eq-hw-38})&\lesssim \frac{1}{K}\lVert g(t)\rVert_{L_x^2}\left(\sum_{n\geq 1} |f_n|^2\right)^{1/2}=\frac{1}{K}\lVert f(t)\rVert_{L_x^2}\lVert g(t)\rVert_{L_x^2}
\end{align*}
and
\begin{align}
\lVert (II)\rVert_{L_{x,t}^2}&\lesssim \frac{1}{K}\lVert f\rVert_{L_{x,t}^2}\lVert g\rVert_{L_{x,t}^2}.\label{eq-hw-41}
\end{align}

Summing (\ref{eq-hw-37}), (\ref{eq-hw-41}) over dyadic $K$ gives the estimate
\begin{align*}
\lVert f\rVert_{0,b}\lVert (\sqrt{-\Delta})^{5(2b-1)+\mu}g\rVert_{L_{x,t}^2}
\end{align*}
as desired.
\end{proof}

\begin{remark}
Note that the second factor on the right-hand side of ($\ref{eq-AA3}$) involves a classical space-time norm (rather than $X^{s,b}$-type norms).  This will be important in the next section.
\end{remark}

\section{Proof of Theorem $\ref{thm-nls-2d}$: convergence of the solutions of the truncated equations}

In this section we establish Theorem $\ref{thm-nls-2d}$ for radial NLS on the $2D$ ball and prove convergence of the sequence $(u_{N})$ in the space $X^{s,b}$, almost surely in $\omega\in \Omega$.  Convergence in the space $C_tH_x^s$ then follows from the embedding $X^{s,b}\hookrightarrow L_t^\infty H_x^s$.

\begin{proof}[Proof of Theorem $\ref{thm-nls-2d}$.] 
Let $0<s<\frac{1}{2}$ and $T>0$ be given.  We first establish the $\mu_F$-almost sure convergence of solutions $(u_{N_k})$ with $N_k=2^k$.  Moreover, up to a covering argument partitioning the time interval, we may assume without loss of generality that $T<\frac{1}{2}$.

Let $\sigma\in (0,\frac{1}{2})$, $1\leq r<\frac{2}{\sigma}$ and $1\leq p,q<\infty$ be fixed parameters to be determined later in the argument.  Fix $N_0<N_1$, and for each $\omega\in \Omega$, let $u_{N_0}$, $u_{N_1}$ be the solutions of the truncated equation ($\ref{truncated}$) with corresponding initial data $P_{N_0}\phi^{(\omega)}$ and $P_{N_1}\phi^{(\omega)}$.  

Let $B_{N_0}>0$ also be a parameter to be determined later satisfying $B_{N_0}\lesssim N_0^\gamma$ for some $\gamma>0$.  Then, invoking ($\ref{eq-aaaq-1}$) and Proposition $\ref{prop-prob}$, there exists a set $\Omega(N_0,N_1)$ with
\begin{align}
\mu_F(\Omega(N_0,N_1))\lesssim \exp(-B_{N_0}^c)\label{prob-omega}
\end{align}
such that for all $\omega\in \Omega\setminus \Omega(N_0,N_1)$ one has the bounds
\begin{align}
\lVert P_{N_1}\phi^{(\omega)}-P_{N_0}\phi^{(\omega)}\rVert_{H_x^s}&\lesssim N_0^{s-\frac{1}{2}},\label{constr-bd-1}
\end{align}
together with
\begin{align}
\max\big\{\lVert u_{N_0}\rVert_{L_x^pL_t^q}\, ,\,\lVert u_{N_1}\rVert_{L_x^pL_t^q}\big\}&< B_{N_0},
\end{align}
and
\begin{align}
\max\big\{\lVert (\sqrt{-\Delta})^{\sigma}u_{N_0}\rVert_{L_x^rL_t^q}\, ,\,\lVert (\sqrt{-\Delta})^{\sigma}u_{N_1}\rVert_{L_x^rL_t^q}\big\}&< B_{N_0}.\label{prob-omega-2}
\end{align}

Fixing $\omega\in \Omega\setminus \Omega(N_0,N_1)$, we now estimate the $X^{s,b}([0,T])$ norm of the difference $u_{N_1}-u_{N_0}$.  For this, we will use an iterative argument on short time intervals.  In particular, fixing a small value $\eta=\eta(N_0)>0$, and partitioning the interval $[0,T]$ into $T/\eta$ intervals $[t_i,t_{i+1})$, with $t_{i+1}-t_i=\eta$, we write
\begin{align}
\nonumber u_{N_1}(t)-u_{N_0}(t)&=e^{i(t-t_i)\Delta}(u_{N_1}(t_i)-u_{N_0}(t_i))\\
\nonumber &\hspace{0.2in}-i\int_{t_i}^{t} e^{i(t-\tau)\Delta}\Big[P_{N_1}(|u_{N_1}|^\alpha u_{N_1})(\tau)-P_{N_0}(|u_{N_1}|^\alpha u_{N_1})(\tau)\Big]d\tau\\
&\hspace{0.2in}-i\int_{t_i}^{t} e^{i(t-\tau)\Delta}P_{N_0}\Big[|u_{N_1}|^\alpha u_{N_1}-|u_{N_0}|^\alpha u_{N_0}(\tau)\Big]d\tau\label{eq21}
\end{align}
for $t\in [t_i,t_{i+1})$.  

We now estimate the $X^{s,b}$ norms of each term in ($\ref{eq21}$).  In what follows, all $X^{s,b}$ and $L_t^p$ norms will be taken on the time interval $[t_i,t_{i+1})$, unless otherwise indicated.  

Using the unitarity of the linear propagator and the definition of the $X^{s,b}$ norm, the first term is estimated as 
\begin{align}
\lVert e^{i(t-t_i)\Delta}(u_{N_1}(t_i)-u_{N_0}(t_i))\rVert_{{s,b}}\lesssim \lVert u_{N_1}(t_i)-u_{N_0}(t_i)\rVert_{H_x^s}.\label{eq36}
\end{align}

On the other hand, to estimate the second term in ($\ref{eq21}$), we fix $s'\in (s,1/2)$ and invoke Lemma $\ref{lem35}$, and the fractional product rule, which give the bound
\begin{align}
\nonumber &\bigg\lVert \int_{t_i}^{t} e^{i(t-\tau)\Delta}[P_{N_1}(|u_{N_1}|^\alpha u_{N_1})(\tau)-P_{N_0}(|u_{N_1}|^\alpha u_{N_1})(\tau)]d\tau\bigg\rVert_{{s,b}}\\
\nonumber &\hspace{0.2in}\lesssim N_0^{-(s'-s)}\lVert (\sqrt{-\Delta})^{2b-1+s'+\epsilon} |u_{N_1}|^\alpha u_{N_1}\rVert_{L_x^{\frac{4}{3}+\epsilon}L_t^{\frac{4}{3}}}\\
\nonumber &\hspace{0.2in}\lesssim N_0^{-(s'-s)}\lVert u_{N_1}\rVert_{L_x^{r_1}L_t^{\frac{4(\alpha+1)}{3}}}^\alpha \lVert (\sqrt{-\Delta})^{2b-1+s'+\epsilon}u_{N_1}\rVert_{L_x^{r_2}L_t^{\frac{4(\alpha+1)}{3}}}\\
&\hspace{0.2in}\lesssim N_0^{-(s'-s)}B_{N_0}^{\alpha+1},\label{eq37}
\end{align}
provided that $b$ is chosen sufficiently close to $\frac{1}{2}$, $\epsilon>0$ is chosen sufficiently small, and $\sigma$ is chosen large enough to ensure that
\begin{align*}
2b-1+s'+\epsilon<\sigma,
\end{align*}
and the values $r_1,r_2\geq 1$ satisfy $r_1\leq p$ and $r_2\leq r$ together with $\frac{4(\alpha+1)}{3}<q$ and 
\begin{align*}
\frac{3}{4+3\epsilon}=\frac{\alpha}{r_1}+\frac{1}{r_2}.
\end{align*}

Combining the estimates ($\ref{eq36}$) and ($\ref{eq37}$), we obtain
\begin{align}
\nonumber &\lVert u_{N_1}-u_{N_0}\rVert_{X^{s,b}([t_i,t_i+\eta))}\\
\nonumber &\hspace{0.6in}\lesssim \lVert u_{N_1}(t_i)-u_{N_0}(t_i)\rVert_{H_x^{s}}+N_0^{-(s'-s)}B_{N_0}^{\alpha+1}\\
\label{eqb_aa1}&\hspace{1.0in}+\bigg\lVert \int_{t_i}^{t} e^{i(t-\tau)\Delta}P_{N_0}\Big[|u_{N_1}|^\alpha u_{N_1}-|u_{N_0}|^\alpha u_{N_0}(\tau)\Big]d\tau\bigg\rVert_{{s,b}}
\end{align}
On the other hand, from the assumption that $\alpha$ is an even integer we obtain the expansion
\begin{align*}
&|u_{N_1}|^\alpha u_{N_1}-|u_{N_0}|^\alpha u_{N_0}\\
&\hspace{0.2in}=(u_{N_1}-u_{N_0})F_+(u_{N_0},u_{N_1},\overline{u_{N_0}},\overline{u_{N_1}})+(\overline{u_{N_1}}-\overline{u_{N_0}})F_-(u_{N_0},u_{N_1},\overline{u_{N_0}},\overline{u_{N_1}})
\end{align*}
with $F_+$, $F_-$ homogeneous polynomials of degree $\alpha$.  

We will only estimate the $F_+$ term; the estimate for the $F_-$ term is identical.  Performing dyadic decompositions in frequency, we obtain
\begin{align}
\nonumber &\bigg\lVert \int_{t_i}^t e^{i(t-\tau)\Delta}P_{N_0}[(u_{N_1}-u_{N_0})F_+](\tau)d\tau\bigg\rVert_{{s,b}}\\
\nonumber &\hspace{0.2in}\lesssim \sum_K\bigg\lVert \int_{t_i}^t e^{i(t-\tau)\Delta}P_{N_0}\bigg[\bigg(P_{>K}(u_{N_1}-u_{N_0})\bigg)P_{K<\cdot\leq 2K}F_+\bigg](\tau)d\tau\bigg\rVert_{{s,b}}\\
\nonumber &\hspace{0.4in}+\sum_K \bigg\lVert \int_{t_i}^t e^{i(t-\tau)\Delta}P_{N_0}\bigg[\bigg(P_{\leq K}(u_{N_1}-u_{N_0})\bigg)P_{K<\cdot\leq 2K}F_+\bigg](\tau)d\tau\bigg\rVert_{{s,b}}\\
\label{eqb_aa2}&\hspace{0.2in}=:\sum_K (I)_K+(II)_K,
\end{align}

To estimate the terms $(I)_K$, fix $\epsilon>0$ small and note that by applying Proposition $\ref{propP0}$ followed by the fractional product rule we obtain
\begin{align}
\nonumber &\bigg\lVert \int_{t_i}^t e^{i(t-\tau)\Delta}P_{N_0}\bigg[\bigg(P_{>K}(u_{N_1}-u_{N_0})\bigg)P_{K<\cdot\leq 2K}F_+\bigg](\tau)d\tau\bigg\rVert_{{s,b}}\\
\nonumber &\hspace{0.2in}\lesssim K^{-\epsilon}\lVert u_{N_1}-u_{N_0}\rVert_{{s,b}}\lVert (\sqrt{-\Delta})^{5(2b-1)+2\epsilon}P_{K<\cdot\leq 2K}F_+\rVert_{L_{t,x}^2}\\
\nonumber &\hspace{0.2in}\lesssim K^{-\epsilon}\eta^{1/4}\lVert u_{N_1}-u_{N_0}\rVert_{{s,b}}\lVert u\rVert_{L_{t,x}^{8(\alpha-1)}}^{\alpha-1}\lVert (\sqrt{-\Delta})^{5(2b-1)+2\epsilon}u\rVert_{L_{t,x}^{8}}\\
\label{eqb_aa4}&\hspace{0.2in}\lesssim K^{-\epsilon}\eta^{1/4}B_{N_0}^{\alpha}\lVert u_{N_1}-u_{N_0}\rVert_{{s,b}}
\end{align}
for $\epsilon>0$ sufficiently small and $b>\frac{1}{2}$ close enough to $\frac{1}{2}$ to ensure $8<\frac{2}{5(2b-1)+\epsilon}$.

Turning to $(II)_K$, we argue in a similar manner.  In particular, again fix $\epsilon>0$ small and note that by Lemma $\ref{lem35}$, the H\"older inequality and the fractional product rule, one has
\begin{align}
\nonumber &\bigg\lVert \int_{t_i}^t e^{i(t-\tau)\Delta}P_{N_0}\bigg[\bigg(P_{\leq K}(u_{N_1}-u_{N_0})\bigg)P_{K<\cdot\leq 2K}F_+\bigg](\tau)d\tau\bigg\rVert_{{s,b}}\\
\nonumber &\hspace{0.2in}\lesssim K^{-\epsilon}\lVert u_{N_1}-u_{N_0}\rVert_{L_{t}^2L_{x}^2}\lVert (\sqrt{-\Delta})^{2b-1+s+2\epsilon}F_+\rVert_{L_{t,x}^4}\\
\nonumber &\hspace{0.2in}\lesssim K^{-\epsilon}\eta^{1/2}\lVert u_{N_1}-u_{N_0}\rVert_{L_t^\infty L_x^2}\lVert u\rVert_{L_{x}^\frac{4(4+\epsilon')(\alpha-1)}{\epsilon'}L_t^{{8(\alpha-1)}}}^{\alpha-1}\lVert (\sqrt{-\Delta})^{2b-1+s+2\epsilon}u\rVert_{L_{x}^{4+\epsilon'}L_t^8}\\
\label{eqb_aa3}&\hspace{0.2in}\lesssim K^{-\epsilon}\eta^{1/2}B_{N_0}^\alpha\lVert u_{N_1}-u_{N_0}\rVert_{{s,b}}
\end{align}
for every dyadic $K\geq 1$, provided the values $\epsilon$, $\epsilon'$ and $b$ are chosen sufficiently small to ensure $4+\epsilon'<\frac{2}{2b-1+s+2\epsilon}$.

Combining ($\ref{eqb_aa1}$), ($\ref{eqb_aa2}$), ($\ref{eqb_aa4}$) and ($\ref{eqb_aa3}$) and evaluating the summation over $K$ then gives the bound
\begin{align}
\nonumber \lVert u_{N_1}-u_{N_0}\rVert_{{s,b}}&\lesssim \lVert u_{N_1}(t_i)-u_{N_0}(t_i)\rVert_{H_x^{s}}+N_0^{-(s'-s)}B_{N_0}^{\alpha+1}\\
&\hspace{0.4in}+\eta^{1/4}B_{N_0}^\alpha\lVert u_{N_1}-u_{N_0}\rVert_{{s,b}},\label{4.19}
\end{align}
so that choosing 
\begin{align*}
\eta= cB_{N_0}^{-4\alpha}\quad\textrm{and}\quad B_{N_0}=(c\log N_0)^{\frac{1}{4\alpha}}
\end{align*}
with $c>0$ sufficiently small (depending on the implicit constant) gives the estimate
\begin{align}
\lVert u_{N_1}-u_{N_0}\rVert_{{s,b}}&\lesssim \lVert u_{N_1}(t_i)-u_{N_0}(t_i)\rVert_{H_x^{s}}+N_0^{-(s'-s)/2}.\label{constr-bd-iterative}
\end{align}

It now remains to estimate the $H_x^s$ norm appearing on the right side of ($\ref{constr-bd-iterative}$).  We argue iteratively, recalling the initial-time bound on $u_{N_1}-u_{N_0}$ given by ($\ref{constr-bd-1}$) and successively applying ($\ref{constr-bd-iterative}$) to yield the bound
\begin{align}
\lVert u_{N_1}-u_{N_0}\rVert_{L_t^\infty([0,T);H_x^s)}&\lesssim C^{T/\eta}N_0^{-(s'-s)/4}.\label{constr-bd-iterative-2}
\end{align}

Substituting ($\ref{constr-bd-iterative-2}$) into ($\ref{constr-bd-iterative}$) and using the above choice of $B_{N_0}$, we obtain the estimate
\begin{align}
\lVert u_{N_1}-u_{N_0}\rVert_{X^{s,b}([t_i,t_{i+1}))}&\lesssim N_0^{-(s'-s)/4}\label{constr-bd-iterative-3}
\end{align}
for all subintervals $[t_i,t_{i+1}]\subset [0,T]$.  By a covering argument, one immediately gets
\begin{align}
\lVert u_{N_1}-u_{N_0}\rVert_{X^{s,b}([0,T])}&\lesssim_T N_0^{-(s'-s)/8}\label{constr-bd-iterative-4}
\end{align}
for all initial data $\phi^{(\omega)}$ with $\omega\in \Omega\setminus \Omega(N_0,N_1)$.  

Now, letting $N_k=2^k$ and setting
\begin{align*}
\Omega_0:=\bigcap_{J\geq 1}\bigcup_{j\geq J} \Omega(N_j,N_{j+1}),
\end{align*}
we obtain that $(u_{N_k})$ is a Cauchy sequence in $X^{s,b}([0,T])$ for all $\omega\in \Omega\setminus \Omega_0$.  Moreover, recalling the bound ($\ref{prob-omega}$), we have 
\begin{align}
\mu_F(\Omega_0)&\lesssim \sum_{j\geq J} \exp(-c\log N_j^{c/4\alpha})\label{conv-summation}
\end{align}
for all $J\geq 1$, and thus $\mu_F(\Omega_0)=0$.  The sequence $(u_{N_k})$ then converges in $X^{s,b}([0,T])$  $\mu_F$-almost surely.  

In order to conclude convergence of the full sequence $(u_N)$ a slightly more refined analysis is required.  The essential difficulty is a consequence of the dependence of the excluded sets of initial data $\Omega(N_j,N_{j+1})$ on $N_j$ and $N_{j+1}$, since without passing to the subsequence $(u_{N_k})$ we cannot immediately conclude the convergence on the right side of ($\ref{conv-summation}$).  

In this case, fix a parameter $C(p)\gg 1$ to be determined, and note that for each $N_0\gg 1$, we may consider
\begin{align*}
M=(\log N_0)^{C(p)}
\end{align*}
and replace the set $\Omega\setminus \Omega(N_0,N_1)$ chosen in (\ref{prob-omega})--(\ref{prob-omega-2}) by
\begin{align*}
\Omega'(N_0)=\bigg\{\omega\in\Omega&:\lVert \phi^{(\omega)}-P_{N_0}\phi^{(\omega)}\rVert_{H_x^s}\lesssim N_0^{s-\frac{1}{2}},\quad \lVert u_{N_0}\rVert_{L_x^pL_t^q}<B_{N_0},\\
&\hspace{0.2in}\lVert (\sqrt{-\Delta})^\sigma u_{N_0}\rVert_{L_x^rL_t^q}<B_{N_0}, \\
&\hspace{0.2in}\max_{N_0\leq N<2N_0} \lVert u_N-P_Mu_N\rVert_{L_x^pL_t^q}<1,\\
&\hspace{0.2in}\max_{N_0\leq N<2N_0} \lVert (\sqrt{-\Delta})^\sigma (u_N-P_Mu_N)\rVert_{L_x^rL_t^q}<1\bigg\}
\end{align*}

Recalling Proposition $\ref{prop-prob}$ and Proposition $\ref{prop-prob-2}$, we then have
\begin{align*}
\mu_F(\Omega\setminus \Omega'(N_0))<e^{-(B_{N_0})^c}+2N_0e^{-(M^{\frac{2}{p}-\sigma})^c}
\end{align*}
so that by choosing $C(p)$ sufficiently large we obtain
\begin{align*}
\mu_F(\Omega\setminus \Omega'(N_0))<3e^{-(B_{N_0})^c}.
\end{align*}

Now, note that for $N_0\leq N_1<2N_0$ we have
\begin{align*}
\lVert u_{N_1}\rVert_{L_x^pL_t^q}&\leq \lVert u_{N_1}-P_Mu_{N_1}\rVert_{L_x^pL_t^q}+\lVert P_M(u_{N_1}-u_{N_0})\rVert_{L_x^pL_t^q}\\
&\hspace{0.2in}+\lVert u_{N_0}-P_Mu_{N_0}\rVert_{L_x^pL_t^q}+\lVert u_{N_0}\rVert_{L_x^pL_t^q}\\
&\leq 2+T^{1/q}\lVert P_M(u_{N_1}-u_{N_0})\rVert_{L_{t,x}^\infty}+B_{N_0}\\
&\leq 2+T^{1/q}M^{1-s}\lVert u_{N_1}-u_{N_0}\rVert_{s,b}+B_{N_0}\\
&\leq (\log N_0)^C\lVert u_{N_1}-u_{N_0}\rVert_{s,b}+2B_{N_0}
\end{align*}
and
\begin{align*}
&\lVert (\sqrt{-\Delta})^\sigma u_{N_1}\rVert_{L_x^rL_t^q}\\
&\hspace{0.2in}\leq \lVert (\sqrt{-\Delta})^\sigma (u_{N_1}-P_Mu_{N_1})\rVert_{L_x^rL_t^q}+\lVert (\sqrt{-\Delta})^\sigma P_M(u_{N_1}-u_{N_0})\rVert_{L_x^rL_t^q}\\
&\hspace{0.4in}+\lVert (\sqrt{-\Delta})^\sigma (u_{N_0}-P_Mu_{N_0})\rVert_{L_x^rL_t^q}+\lVert (\sqrt{-\Delta})^\sigma u_{N_0}\rVert_{L_x^rL_t^q}\\
&\hspace{0.2in}\leq 2+T^{1/q}M^\sigma\lVert P_M(u_{N_1}-u_{N_0})\rVert_{L_{t,x}^\infty}+B_{N_0}\\
&\hspace{0.2in}\leq 2+T^{1/q}M^{1+\sigma-s}\lVert P_M(u_{N_1}-u_{N_0})\rVert_{s,b}+B_{N_0}\\
&\hspace{0.2in}\leq (\log N_0)^C\lVert u_{N_1}-u_{N_0}\rVert_{s,b}+2B_{N_0}.
\end{align*}

For $\omega\in \Omega'(N_0)$, the analogue of ($\ref{4.19}$) then becomes
\begin{align*}
\lVert u_{N_1}-u_{N_0}\rVert_{s,b}&\lesssim \lVert u_{N_1}(t_i)-u_{N_0}(t_i)\rVert_{H_x^{s}}+\eta^{1/4}B_{N_0}^\alpha\lVert u_{N_1}-u_{N_0}\rVert_{{s,b}}\\
&\hspace{0.4in}+(\log N_0)^{C\alpha}\lVert u_{N_1}-u_{N_0}\rVert_{s,b}^{\alpha+1}+N_0^{-(s'-s)}B_{N_0}^{\alpha+1}.
\end{align*}

It then follows that
\begin{align*}
\lVert u_{N_1}-u_{N_0}\rVert_{s,b}&\lesssim \lVert u_{N_1}(t_i)-u_{N_0}(t_i)\rVert_{H_x^s}+(\log N_0)^{C\alpha}\lVert u_{N_1}-u_{N_0}\rVert_{s,b}^{\alpha+1}\\
&\hspace{0.2in}+N_0^{-\frac{1}{2}(s'-s)}
\end{align*}
and thus (\ref{constr-bd-iterative})--(\ref{constr-bd-iterative-4}) hold as before.
\end{proof}

\begin{remark}
As we pointed out, our assumption that $\alpha\in 2\mathbb{Z}_+$ in (\ref{equation}) is merely technical, though we would assume $\alpha\geq 2$ for smoothness reasons.  Alternatively, one could consider nonlinearities of the form
\begin{align}
F(u)=\mp(1+|u|^2)^{\alpha/2},\quad \alpha>0\label{eq-hw-1}
\end{align}
(cf. \cite{T2}).

The method described above may be carried out in higher dimension, leading to the analogue of Theorem $\ref{thm-nls-2d}$ for the radial defocusing NLS on $B_d$
\begin{align}
iu_t+\Delta u-(1+|u|^2)^{\alpha/2}u=0\label{eq-hw-2}
\end{align}
provided $\alpha<\frac{2}{d-2}$.

In $3$D, the counterpart of Theorem $\ref{thm-nls-2d}$ for the defocusing cubic NLS 
\begin{align}
iu_t+\Delta u-|u|^2u=0\label{eq-hw-3}
\end{align}
was established in \cite{BB-3} and seems to require a more delicate analysis.
\end{remark}

\section{The mass-critical focusing case}

In this section we complete the proof of Theorem $\ref{thm-focusing}$.  As noted in the introduction, the essential ingredient is contained in the following proposition, which shows that restriction to a sufficiently small $L_x^2$ ball leads to the construction of a well-defined Gibbs measure.

\begin{proposition}
\label{prop-focusing}
The measure
\begin{align*}
\exp\bigg(\int_{B_2} |\phi|^4dx\bigg)\chi_{\{\lVert \phi\rVert_{L_x^2}<\rho\}}(\phi)d\mu_F(\phi)
\end{align*}
is a bounded measure, provided that $\rho>0$ is chosen sufficiently small.
\end{proposition}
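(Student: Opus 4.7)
The proof will follow the Lebowitz–Rose–Speer strategy \cite{LRS}. A direct Wick computation using \eqref{efn-bds} yields $\mathbb{E}_{\mu_F}\lVert\phi\rVert_{L^4}^4 < \infty$, so in particular $\lVert\phi\rVert_{L^4}^4<\infty$ $\mu_F$-a.s. Since $\lVert P_N\phi\rVert_{L^4}^4\to\lVert\phi\rVert_{L^4}^4$, Fatou's lemma reduces the claim to a uniform-in-$N$ bound on the truncated partition functions
\begin{align*}
Z_N := \int \chi_{\{\lVert P_N\phi\rVert_{L^2}<\rho\}}\exp\Big(\int_{B_2}|P_N\phi|^4\,dx\Big)\,d\mu_F(\phi).
\end{align*}
By the standard layer-cake representation, such a bound follows once we establish the distributional estimate
\begin{align*}
\mu_F\big(\lVert P_N\phi\rVert_{L^2}<\rho,\ \lVert P_N\phi\rVert_{L^4}^4>K\big)\ \lesssim\ e^{-(1+\delta)K}
\end{align*}
for some $\delta>0$, all $K$ sufficiently large, and uniformly in $N$.

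The two key inputs are the sharp Gagliardo–Nirenberg inequality on $B_2$ with Dirichlet boundary condition,
\begin{align*}
\lVert u\rVert_{L^4}^4\leq C^*\,\lVert u\rVert_{L^2}^2\,\lVert\nabla u\rVert_{L^2}^2,\qquad u\in H_0^1(B_2),
\end{align*}
and Gaussian chaos concentration. Set $\psi:=P_N\phi$ and decompose $\psi=P_M\psi+P_{(M,N]}\psi$ at a scale $M=\epsilon K$ (with $\epsilon>0$ small, to be chosen). The event in question splits into (A) $\lVert P_{(M,N]}\psi\rVert_{L^4}^4>K/C_0$, and (B) $\lVert P_M\psi\rVert_{L^4}^4>K/C_0$, for an absolute $C_0$. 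For (A), a direct Wick calculation gives $\mathbb{E}_{\mu_F}\lVert P_{>M}\phi\rVert_{L^4}^4\lesssim(\log M/M)^2$; since $\lVert P_{>M}\phi\rVert_{L^4}^4$ lies in the $4$th Wiener chaos, Gaussian hypercontractivity furnishes $\mu_F(\lVert P_{(M,N]}\psi\rVert_{L^4}^4>K/C_0)\lesssim\exp(-c\sqrt{K}\,M/\log M)$, which for $M=\epsilon K$ is dwarfed by $e^{-(1+\delta)K}$.

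For (B), the inclusion $\lVert P_M\psi\rVert_{L^2}\leq\lVert\psi\rVert_{L^2}<\rho$ combined with the sharp Gagliardo–Nirenberg inequality forces $\lVert\nabla P_M\psi\rVert_{L^2}^2>K/(C_0C^*\rho^2)=:A$. Under $\mu_F$ one has $\lVert\nabla P_M\psi\rVert_{L^2}^2=\sum_{z_n\leq M}|g_n|^2$, a sum of $\sim M$ independent standard exponential random variables, and the elementary Chernoff bound yields
\begin{align*}
\mu_F\Big(\sum_{z_n\leq M}|g_n|^2>A\Big)\ \leq\ \exp\big(-A+M+M\log(A/M)\big)
\end{align*}
whenever $A\gg M$. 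Substituting the values of $A$ and $M$ produces an exponent of the form $-K\big(\tfrac{1}{C_0C^*\rho^2}-\epsilon-\epsilon\log\tfrac{1}{C_0C^*\rho^2\epsilon}\big)$.

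The main obstacle is now to calibrate the two small parameters so this exponent is at most $-(1+\delta)K$, i.e.\ so that the rate strictly exceeds $1$. The strategy is to first choose $\rho$ small enough that $1/(C_0C^*\rho^2)$ is large, and then $\epsilon$ small, so the correction $\epsilon+\epsilon\log\tfrac{1}{C_0C^*\rho^2\epsilon}$ is negligible compared to $1/(C_0C^*\rho^2)$. This is precisely the Lebowitz–Rose–Speer threshold phenomenon and is the reason for the smallness requirement on $\rho$. Combining cases (A) and (B) gives the distributional tail estimate, which via layer-cake delivers the uniform bound on $Z_N$; Fatou then concludes the proof.
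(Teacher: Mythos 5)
Your proposal is correct but follows a genuinely different route from the paper, and it is worth comparing the two.

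You reproduce the Lebowitz--Rose--Speer mechanism in a fairly canonical way: a frequency split at $M\sim\epsilon K$, with the high-frequency piece controlled by Wick calculus plus $4$th-chaos hypercontractivity (giving a super-exponential tail $\exp(-c\sqrt K\,M/\log M)$, negligible once $M\sim\epsilon K$), and the low-frequency piece controlled by the Gagliardo--Nirenberg inequality, which converts the constraint $\lVert P_M\psi\rVert_{L^2}<\rho$ into $\lVert\nabla P_M\psi\rVert_{L^2}^2\gtrsim K/\rho^2$, a chi-square tail event whose Chernoff rate $-A+m+m\log(A/m)$ beats $-(1+\delta)K$ once $\rho$ is small and $\epsilon$ is chosen after $\rho$. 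The Fatou/layer-cake reduction at the start is also clean (note $\lVert P_N\phi\rVert_{L^2}$ is monotone increasing in $N$, so $\chi_{\{\lVert\phi\rVert_{L^2}<\rho\}}\le\chi_{\{\lVert P_N\phi\rVert_{L^2}<\rho\}}$ for every $N$, which makes the passage to the limit especially simple). The only tiny inaccuracy is cosmetic: the Wick computation actually gives $\mathbb{E}\lVert P_{>M}\phi\rVert_{L^4}^4\lesssim \log M/M^2$ rather than $(\log M/M)^2$, but your (slightly larger) bound still makes case (A) overwhelmingly subdominant, so the conclusion is unaffected.

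The paper takes a more hands-on, domain-specific path. Instead of Gagliardo--Nirenberg and chi-square concentration, it performs a dyadic decomposition in frequency $n\sim M$ \emph{and} in space $|x|\sim 2^k M^{-1}$, uses the explicit Bessel asymptotics $|e_n(x)|\lesssim|x|^{-1/2}$ to derive the deterministic bound $\lVert\sum_{n\sim M}\frac{g_n}{z_n}e_n\rVert_{L^4(|x|\sim 2^kM^{-1})}\lesssim 2^{-k/4}M^{1/2}\rho$ under the $L^2$ constraint, and then applies the Gaussian-process concentration inequality \eqref{f-8} block by block. This yields the tail $e^{-c\rho^{-2}\lambda^4}$ with the $\rho$-dependence explicit in the exponent, which makes the smallness threshold transparent. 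Your argument is more portable (it only uses Gagliardo--Nirenberg, which is available on any domain by extension-by-zero, plus generic Gaussian chaos estimates) at the cost of a somewhat more implicit threshold; the paper's argument is more tied to the ball but exhibits the $\rho$-dependence cleanly and avoids invoking the sharp Gagliardo--Nirenberg constant. Both are valid proofs of the proposition as stated.
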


\begin{proof}
Fix $\rho>0$ to be determined.  To establish the proposition, it suffices to show that there exists $C>1$ such that for every $\lambda\gg 1$, we have
\begin{align}
\mu_F(\{\phi:\lVert \phi\rVert_{L_x^4}>\lambda,\,\lVert \phi\rVert_{L_x^2}<\rho\})\lesssim e^{-C\lambda^4}.\label{f-2}
\end{align}

This will be possible by choosing $\rho$ sufficiently small.  

To estimate the left-hand side of ($\ref{f-2}$), we begin by writing 
\begin{align}
\bigg\lVert \sum_{n\geq 1} \frac{g_n(\omega)}{z_n}e_n\bigg\rVert_{L_x^4}\leq \sum_{M\geq 1} \bigg\lVert \sum_{n\sim M}\frac{g_n(\omega)}{z_n}e_n\bigg\rVert_{L_x^4}\label{f-3}
\end{align}
where the summation in $M$ is taken over dyadic integers.  

For each $M\geq 1$ dyadic, choose $j\in\mathbb{Z}$ such that $M\sim 2^j(\lambda/\rho)^2$, and define
\begin{align*}
\sigma_M=\tfrac{1}{j^2}.
\end{align*}
Then
$$\sum_{M\geq 1} \sigma_M\lesssim 1$$
and therefore the condition 
\begin{align}
\bigg\lVert \sum_{n\geq 1} \frac{g_n(\omega)}{z_n}e_n\bigg\rVert_{L_x^4}>\lambda\label{f-A1}
\end{align}
implies 
\begin{align}
\bigg\lVert \sum_{n\sim M} \frac{g_n(\omega)}{z_n}e_n\bigg\rVert_{L_x^4}\gtrsim \sigma_M\lambda\label{f-A2}
\end{align}
for some $M\geq 1$.

To proceed, we consider an additional spatial decomposition, writing
\begin{align*}
\bigg\lVert \sum_{n\sim M}\frac{g_n(\omega)}{z_n}e_n\bigg\rVert_{L_x^4}^4&\leq \sum_{k\geq 0} \bigg\lVert \sum_{n\sim M} \frac{g_n(\omega)}{z_n}e_n\bigg\rVert_{L_x^4(|x|\sim 2^{k}M^{-1})}^4\\
&\hspace{0.4in}+\bigg\lVert \sum_{n\sim M}\frac{g_n(\omega)}{z_n}e_n\bigg\rVert_{L_x^4(|x|\leq M^{-1})}^4.
\end{align*}

The condition ($\ref{f-A1}$) implies that for some $0\leq k\lesssim \log M$ we have
\begin{align}
\bigg\lVert \sum_{n\sim M} \frac{g_n(\omega)}{z_n}e_n\bigg\rVert_{L_x^4(|x|\sim 2^kM^{-1})}\gtrsim \frac{\sigma_M\lambda}{(k+1)^{1/2}}\label{f-4}.
\end{align}

Let $(a_n)_{M\leq n<2M}$ be a sequence of arbitrary complex coefficients.  One then has, using the Bessel function asymptotics, the estimate
\begin{align*}
&\bigg\lVert \sum_{n\sim M} a_ne_n\bigg\rVert_{L_x^4(|x|\sim 2^kM^{-1})}^4\\
&\hspace{0.2in}\lesssim \int_{r\sim 2^kM^{-1}}\bigg|\sum_{n\sim M} a_ne^{iz_nr}\bigg|^4\frac{dr}{r}+\frac{1}{M^4}\left(\sum_{n\sim M} |a_n|\right)^4\int_{r\sim 2^kM^{-1}} \frac{dr}{r^5}\\
&\hspace{0.2in}\leq 2^{-k}M\int_0^t \bigg|\sum_{n\sim M}a_ne^{2\pi iz_nr}\bigg|^4dr+16^{-k}M^2\left(\sum_{n\sim M} |a_n|^2\right)^{2}\\
&\hspace{0.2in}\lesssim 2^{-k}M^2\bigg(\sum_{n\geq 1}|a_n|^2\bigg)^2,
\end{align*}
Taking $a_n=\frac{g_n(\omega)}{z_n}$ in this bound, it follows that if in addition to ($\ref{f-A1}$) we assume $\lVert \phi\rVert_{L_x^2}<\rho$, then
\begin{align}
\bigg\lVert \sum_{n\sim M} \frac{g_n(\omega)}{z_n}e_n\bigg\rVert_{L_x^4(|x|\sim 2^kM^{-1})}\lesssim 2^{-k/4}M^{1/2}\rho.\label{f-6}
\end{align}

Together with (\ref{f-4}), this implies that for $M$ and $k$ satisfying (\ref{f-A2}) and (\ref{f-4}) we have
\begin{align*}
M>\bigg(\frac{\lambda}{\rho}\bigg)^2\frac{2^{k/2}}{k+1}\sigma_M^2
\end{align*}
and thus
\begin{align*}
j^42^j>\frac{2^{k/2}}{k+1}
\end{align*}
so that $k\lesssim j$.  In particular, we can therefore let $j\geq 0$ when writing $M\sim 2^j\left(\frac{\lambda}{\rho}\right)^2$.

We now assemble the above ingredients into the desired probabilistic estimate (\ref{f-2}), for which we will make use of the estimate ($\ref{f-8}$) for Gaussian processes.  Noting that the bound $|e_n(x)|\lesssim |x|^{-1/2}$ implies 
\begin{align}
&\nonumber \mathbb{E}_\omega\bigg[\,\bigg\lVert \sum_{n\sim M} \frac{g_n(\omega)}{z_n}e_n\bigg\rVert_{L_x^4(|x|\sim 2^kM^{-1})}\,\bigg]\\
&\hspace{0.6in}\sim \bigg\lVert \bigg(\sum_{n\sim M} \frac{e_n^2}{n^2}\bigg)^{1/2}\bigg\rVert_{L_x^4(|x|\sim 2^kM^{-1})}\sim \bigg(\sum_{n\sim M} \frac{1}{n^2}\bigg)^{1/2}\sim \frac{1}{\sqrt{M}}\label{f-7}
\end{align}
apply ($\ref{f-8}$) with $$X(\omega)=\sum_{n\sim M} \frac{g_n(\omega)}{z_n}e_n\quad\textrm{and}\quad \lVert\,\cdot\,\rVert=\lVert\,\cdot\,\rVert_{L_x^4(|x|\sim 2^kM^{-1})}.$$  

By (\ref{f-7}), we have $\mathbb{E}[\lVert X\rVert]\sim \frac{1}{\sqrt{M}}$ and we take, according to (\ref{f-4})
\begin{align*}
t=\frac{\sqrt{M}\sigma_M\lambda}{(k+1)^{1/2}}\sim \frac{2^{j/2}\lambda^2}{(k+1)^{1/2}j^2\rho}\gtrsim 1
\end{align*}
since $k\lesssim j$.

We therefore conclude 
\begin{align}
\mathbb{P}_\omega\bigg[\bigg\lVert \sum_{n\sim M} \frac{g_n(\omega)}{z_n}e_n\bigg\rVert_{L_x^4(|x|\sim 2^kM^{-1})}\gtrsim \frac{\sigma_M\lambda}{(k+1)^{1/2}} \bigg]\lesssim e^{-c\frac{2^j}{(k+1)j^4}\rho^{-2}\lambda^4}.\label{f-9}
\end{align}

We now take the sum of (\ref{f-9}) over $j\geq 0$ and $0\leq k\lesssim j$, giving
\begin{align*}
\mu_F(\{\phi:\lVert \phi\rVert_{L_x^4}>\lambda,\,\lVert \phi\rVert_{L_x^2}<\rho\})&\leq e^{-c\rho^{-2}\lambda^4}<e^{-C\lambda^4}
\end{align*}
as desired for $\rho$ small enough.
\end{proof}

\begin{remark}
\label{rmk-hw-2-1}
We did not address here the issue of what is the optimal value of $\rho$ for Proposition $\ref{prop-focusing}$ to hold, which is an interesting question since it corresponds to the phase transition.  Note that this problem was not even settled for $d=1$ (cf. \cite{LRS}).
\end{remark}

\begin{remark}
In the $d$-dimensional setting, a similar argument applies for $\alpha=\frac{4}{d}$, providing normalized Gibbs measures
\begin{align}
e^{\frac{1}{\alpha+2}\lVert \phi\rVert_{L_x^{\alpha+2}}^{\alpha+2}}\chi_{\{\lVert \phi\rVert_{L_x^2}<\rho\}}\mu_F(d\phi)\label{eq-hw-2-1}
\end{align}
for $\rho$ sufficiently small (in the mass-subcritical case $\alpha<\frac{4}{d}$, $\rho$ can be taken arbitrarily).

Of course, one may replace the Hamiltonian by
\begin{align*}
\int_{B_d} \left[|\nabla\phi|^2-(1+|\phi|^2)^{\frac{\alpha}{2}+1}\right]dx
\end{align*}
with $\alpha$ as above, and consider the corresponding NLS
\begin{align}
iu_t+\Delta u+u(1+|u|^2)^{\alpha/2}=0.\label{eq-hw-2-2}
\end{align}

One obtains then the analogue of Theorem $\ref{thm-focusing}$, provided moreover $\alpha<\frac{2}{d-2}$ according to the comment at the end of $\S5$.
\end{remark}

\begin{remark}
\label{remark-s5}
Returning to Remark $\ref{rmk-hw-2-1}$, a similar phase transition occurs as for $d=1$, $\alpha=4$, described in \cite{LRS} (see also \cite{BS} for results on $\mathbb{T}^2$, though the situation there is different).

For sufficiently large $\rho$, the measures
\begin{align}
d\mu_G^{(N)}=e^{\frac{1}{4}\lVert \phi_N\rVert_{L_x^4}^4}\chi_{\{\lVert \phi_N\rVert_{L_x^2}<\rho\}}d\mu_F^{(N)}\label{eq-hw-3-1}
\end{align}
become unbounded.  A priori, this may not rule out the possibility that their normalization has an interesting limit distribution with perhaps a well-defined Schr\"odinger dynamics.  But this turns out not to be the case.  The distribution $\mu_G^{(N)}$ concentrates on functions $\phi_N$ for which
\begin{align}
\lVert \phi_N\rVert_{L_x^2\left(|x|<O\left(\frac{1}{N}\right)\right)}=O(\rho)\label{eq-hw-3-2}
\end{align}
and hence, in the limit, $|\phi_N|^2$ exhibits a delta function behavior at $x=0$.  

Note that for such functions
\begin{align}
\nonumber H(\phi_N)=\int_{B_2}\left[|\nabla \phi_N|^2-\frac{1}{4}|\phi_N|^4\right]&<CN^2\lVert \phi_N\rVert_{L_x^2}^2-cN^2\lVert \phi_N\rVert_{L_x^2\left(|x|<O\left(\frac{1}{N}\right)\right)}^4\\
&<-CN^2\rho^4<0\label{eq-hw-3-3}
\end{align}
for $\rho$ sufficiently large.  Invoking Kavian's extension of Glassey's theorem (see \cite{K}), it follows that the solution to the Cauchy problem
\begin{align*}
\left\lbrace\begin{array}{l}iu_t+\Delta u+u|u|^2=0\\
u|_{t=0}=\phi_N\end{array}\right.
\end{align*}
blows up in finite time.

Let us verify ($\ref{eq-hw-3-2}$), taking for $\rho$ a sufficiently large fixed constant.  We prove that
\begin{align}
\log \lVert \mu_G^{(N)}\rVert\sim \rho^4N^2\label{eq-hw-3-4}.
\end{align}

Since clearly $\lVert \phi_N\rVert_{L_x^4}\lesssim N^{1/2}\lVert \phi_N\rVert_{L_x^2}\leq \rho N^{1/2}$, the upper bound in ($\ref{eq-hw-3-4}$) is clear.  Conversely, write
\begin{align}
\lVert \mu_G^{(N)}\rVert>e^{\frac{c}{4}\rho^4N^2}\mu_F^{(N)}\left(\left\{\phi_N:\lVert \phi_N\rVert_{L_x^2}<\rho,\, \lVert \phi_N\rVert_{L_x^4}>c\rho N^{1/2}\right\}\right).\label{eq-hw-3-5}
\end{align}

Since $|\phi_N(0)|\lesssim N^{1/2}\lVert \phi_N\rVert_{L_x^4}$, we have
\begin{align}
\nonumber &\mu_F^{(N)}\left(\left\{\phi_N:\lVert \phi_N\rVert_{L_x^2}<\rho,\, \lVert \phi_N\rVert_{L_x^4}>c\rho N^{1/2}\right\}\right)\\
\nonumber &\hspace{0.2in}\geq \mes\left(\left\{\omega:\sum_{n=1}^N \frac{|g_n(\omega)|^2}{z_n^2}<\rho^2\quad\textrm{and}\quad \left|\sum_{n=1}^N\frac{g_n(\omega)}{z_n}e_n(0)\right|>c'\rho N\right\}\right)\\
\nonumber &\hspace{0.2in}\geq \mes\left(\left\{\omega:|g_1(\omega)|,\cdots,|g_{[N/2]}(\omega)|<1\quad\right.\right.\\
\nonumber &\hspace{1.2in}\left.\left.\textrm{and}\quad g_{[N/2]+1}(\omega),\cdots,g_N(\omega)\sim c''\rho \sqrt{N}\right\}\right)\\
&\hspace{0.2in}>e^{-C\rho^2N^2}.\label{eq-hw-3-6}
\end{align}
Hence, ($\ref{eq-hw-3-4}$) follows from ($\ref{eq-hw-3-5}$), ($\ref{eq-hw-3-6}$) by taking $\rho$ large enough.  

It is obvious from ($\ref{eq-hw-3-4}$) that $\mu_G^{(N)}$ is concentrated on functions $\phi_N$ for which
\begin{align}
\lVert \phi_N\rVert_{L_x^4}\sim \rho N^{1/2}.\label{eq-hw-3-7}
\end{align} 
Finally, assuming $\phi_N$ satisfies ($\ref{eq-hw-3-7}$), we verify ($\ref{eq-hw-3-2}$).  

Write
\begin{align*}
\lVert \phi_N\rVert_{L_x^4}^4&\leq \sum_{\{k:2^k<N\}}\int_{|x|\sim 2^k/N} |\phi_N(x)|^4dx\\
&\leq \sum_k \lVert \phi_N\rVert_{L_x^\infty(|x|\sim 2^k/N)}^2\lVert \phi_N\rVert_{L_x^2(|x|\sim 2^k/N)}^2
\end{align*}
and
\begin{align*}
\lVert \phi_N\rVert_{L_x^\infty(|x|\sim 2^k/N)}&\leq \sum_{n=1}^N |\widehat{\phi}_N(n)|\lVert e_n\rVert_{L_x^\infty(|x|\sim 2^k/N)}\\
&\lesssim \sqrt{\frac{N}{2^k}}\sum_{n=1}^N |\widehat{\phi}_N(n)|\\
&\lesssim N2^{-k/2}\lVert \phi_N\rVert_{L_x^2}\\
&\lesssim \rho 2^{-k/2}N.
\end{align*}
Hence, ($\ref{eq-hw-3-7}$) clearly implies ($\ref{eq-hw-3-2}$).
\end{remark}

\end{document}